\documentclass[hidelinks,onefignum,onetabnum]{siamart251216}

\usepackage{lipsum}
\usepackage{amsfonts}
\usepackage{graphicx}
\usepackage{epstopdf}
\usepackage{algorithmic}
\ifpdf{}
  \DeclareGraphicsExtensions{.eps,.pdf,.png,.jpg}
\else
  \DeclareGraphicsExtensions{.eps}
\fi

\newsiamremark{remark}{Remark}
\newsiamremark{hypothesis}{Hypothesis}
\crefname{hypothesis}{Hypothesis}{Hypotheses}
\newsiamthm{claim}{Claim}
\newsiamremark{fact}{Fact}
\crefname{fact}{Fact}{Facts}

\headers{Energy-minimizing domain decomposition}{Muhammad Hassan, Benjamin Stamm, and Lambert Theisen}

\title{Energy-minimizing domain decomposition\thanks{Submitted to the editors DATE.%
}}

\author{Muhammad Hassan\thanks{Technische Universität München, Department of Mathematics, Boltzmannstrasse 3, Garching 85748, Germany
  (\email{muhammad.hassan@cit.tum.de}).}
\and Benjamin Stamm\thanks{Department of Mathematics, University of Stuttgart, Germany 
  (\email{best@ians.uni-stuttgart.de}).}
\and Lambert Theisen\thanks{Applied and Computational Mathematics, RWTH Aachen University, Germany
  (\url{https://thsn.dev}, \email{lambert.theisen@rwth-aachen.de}).}
}

\usepackage{amsopn}

\usepackage[svgnames]{xcolor}
\usepackage[english]{babel}
\usepackage[babel=true,expansion=alltext,protrusion=alltext-nott,final]{microtype}
\usepackage[utf8]{inputenc}
\usepackage{textcomp}

\usepackage{mathrsfs}  
\usepackage{amssymb}

\usepackage[varqu,varl]{inconsolata}
\usepackage{mathtools}
\usepackage[T1]{fontenc}
\usepackage{textcomp}
\usepackage{booktabs}
\usepackage[inline]{enumitem}
\numberwithin{equation}{section}
\usepackage[normalem]{ulem}

\usepackage{float}
\usepackage{silence}
\usepackage{caption}
\usepackage{subcaption}
\usepackage{xspace}
\usepackage[scientific-notation=true]{siunitx}
\usepackage{pstricks}
\usepackage{hyperref}
\usepackage{tikz}
\usetikzlibrary{positioning}
\usetikzlibrary{calc}
\usepackage{pgfplots}
\pgfplotsset{width=10cm,compat=1.9}
\usepackage{pgfplotstable}
\usepackage{bm}
\usepackage{sidecap}
\usepackage[most]{tcolorbox}
\usepackage{lipsum}

\DeclareFontFamily{U}{mathx}{}
\DeclareFontShape{U}{mathx}{m}{n}{<-> mathx10}{}
\DeclareSymbolFont{mathx}{U}{mathx}{m}{n}
\DeclareMathAccent{\widehat}{0}{mathx}{"70}
\DeclareMathAccent{\widecheck}{0}{mathx}{"71}

\newtheorem{convention}{Convention}

\definecolor{light_gray}{gray}{0.75}
\definecolor{lighter_gray}{gray}{0.5}
\colorlet{light_blue}{blue!20}
\definecolor{dark_green}{rgb}{0.0, 0.6, 0.0}
\definecolor{royal_blue}{rgb}{0.0, 0.22, 0.66}
\definecolor{salmon}{rgb}{1.0, 0.55, 0.41}
\definecolor{gold}{rgb}{0.8, 0.63, 0.21}
\definecolor{navy_blue}{rgb}{0.0, 0.0, 0.5}
\definecolor{crimson}{rgb}{0.79, 0.0, 0.09}
\definecolor{amethyst}{rgb}{0.6, 0.4, 0.8}
\definecolor{alizarin}{rgb}{0.82, 0.1, 0.26}
\definecolor{amaranth}{rgb}{0.9, 0.17, 0.31}
\definecolor{azure}{rgb}{0.0, 0.5, 1.0}
\definecolor{canaryyellow}{rgb}{0.82, 0.41, 0.12}
\definecolor{carrotorange}{rgb}{0.8, 0.33, 0.0}
\definecolor{cadmiumgreen}{rgb}{0.0, 0.42, 0.24}
\definecolor{copper}{rgb}{0.72, 0.45, 0.2}
\definecolor{aqua}{rgb}{0.5, 1.0, 0.83}
\definecolor{awesome}{rgb}{1.0, 0.13, 0.32}
\definecolor{candyapplered}{rgb}{1.0, 0.03, 0.0}
\definecolor{caribbeangreen}{rgb}{0.0, 0.8, 0.6}
\definecolor{indigo}{rgb}{0.0, 0.25, 0.42}

\DeclareMathOperator*{\argmin}{arg\,min}

\newcommand{\R}{\mathbb R}
\newcommand{\N}{\mathbb N}

\newcommand{\bw}{{\bf w}}

\newcommand{\cE}{{\mathcal E}}

\newcommand{\rH}{{\mathrm H}}

\renewcommand{\phi}{\varphi}

\renewcommand{\bold}[1]{{\bf #1}}

\newcommand{\en}{\cE}

\newcommand{\vspan}{\textrm{span}}

\newcommand{\Vapp}{V_{\rm approx}}

\newcommand{\Nloc}{N_{\rm loc}}
\newcommand{\Nkry}{N_{\rm kry}}
\newcommand{\Nevp}{N_{\rm evp}}
\newcommand{\Nnlin}{N_{\rm nl}}
\newcommand{\Nscf}{N_{\rm scf}}

\newcommand{\Vik}{V_i^{(k)}}
\newcommand{\Vjk}{V_j^{(k)}}
\newcommand{\Vzk}{V_0^{(k)}}

\newcommand{\fiv}{{\bf{f}}^{i}}
\newcommand{\fz}{{\bf{f}}^{0}}
\newcommand{\yi}{{\bf{y}}^{i}}
\newcommand{\bA}{{\bf{A}}}
\newcommand{\Ai}{{\bf{A}}^{\! i}}
\newcommand{\Az}{{\bf{A}}^{\! 0}}
\newcommand{\Gi}{{\bf{G}}^{i}}
\newcommand{\Gz}{{\bf{G}}^{0}}
\newcommand{\bN}{{\bf{N}}}
\newcommand{\Ni}{{\bf{N}}^{i}}
\newcommand{\Nz}{{\bf{N}}^{0}}
\newcommand{\bS}{{\bf{S}}}
\newcommand{\Si}{{\bf{S}}^{i}}
\newcommand{\Sz}{{\bf{S}}^{0}}
\newcommand{\Hi}{{\bf{H}}^{i}}
\newcommand{\Hz}{{\bf{H}}^{0}}

\newcommand{\bfb}{{\bf b}}

\newcommand{\uk}{{\bf u}^{(k)}}
\newcommand{\uz}{{\bf u}^{(0)}}

\newcommand{\resk}{{\bf r}^{(k)}}
\newcommand{\resz}{{\bf r}^{(0)}}

\newcommand{\Pu}{{\bf{P}}}

\newcommand{\comp}[1]{\mathcal O(#1)}
\newcommand{\Comp}[1]{\mathcal O\big(#1\big)}

\makeatletter
\newcommand{\longdash}[1][2em]{%
  \makebox[#1]{$\m@th\smash-\mkern-7mu\cleaders\hbox{$\mkern-2mu\smash-\mkern-2mu$}\hfill\mkern-7mu\smash-$}}
\makeatother
\newcommand{\omitskip}{\kern-\arraycolsep}

\newcommand{\remove}[1]{}

\setlist[itemize,enumerate]{leftmargin=0.65cm}

\ifpdf
\hypersetup{
  pdftitle={Energy-Minimizing Domain Decomposition},
  pdfauthor={M. Hassan, B. Stamm, and L. Theisen}
}
\fi

\begin{document}

\maketitle
\begin{abstract}
We propose a novel domain decomposition framework based on energy
minimization, applicable to a wide class of linear and nonlinear partial
differential equations. This energy-minimizing domain decomposition (EMDD)
method addresses problems whose solutions are minimizers of an energy
functional, either unconstrained or constrained to a manifold, and thus
covers source and eigenvalue problems in a single formulation. Besides introducing the general framework, we derive the algorithmic realization of the method for four classes of model problems, namely linear and nonlinear source and eigenvalue problems, and derive the computational complexity of the method. Numerical experiments demonstrate the robustness and efficiency of the method across all four model problems and show that it compares favourably against existing model-specific domain decomposition methods.
\end{abstract}

\begin{keywords}
Domain decomposition, variational methods, Euler-Lagrange equations
\end{keywords}

\begin{MSCcodes}
   	65N55, 
    65N22, 
    65N25, 
   	65N30, 
    65H17, 
    65F08, 
    65F10, 
    65F15 
\end{MSCcodes}

\section{Introduction}

Domain decomposition (DD) methods are a cornerstone of scientific computing,
providing a systematic way to solve large-scale partial differential equations
by splitting a global problem into smaller subproblems posed on subdomains.
Being naturally suited to parallel and distributed architectures, they are
indispensable for computationally intensive simulations in engineering, physics, and the
applied sciences. A rich theory has been developed over the past decades,
particularly for linear elliptic problems, where Schwarz-type iterations and
their convergence properties are by now well understood; the monographs
~\cite{Smith1996Domain,quarteroni1999domain,toselli2004domain} witness the maturity of the field.

Despite this maturity, the DD literature remains fragmented along problem
types. Linear source problems are typically treated by additive and
multiplicative Schwarz methods, or their optimized variants, whose convergence
theory rests on linearity and coercivity of the underlying operator. For such
problems, DD is usually employed as a preconditioner for Krylov subspace
methods rather than as a stationary iterative solver, since the Krylov method
selects its residual polynomial adaptively instead of being confined to a fixed
one.

For nonlinear problems the picture is less unified, and methods differ in the
order in which linearization and decomposition are applied, and in whether DD
serves as an iterative scheme or as a preconditioner. Applying a DD iteration
directly to the nonlinear equations yields small subdomain problems that are
often easier to solve than the global one; early
contributions, including analysis, are \cite{lions1988schwarz,dryja1997nonlinear,lui1999schwarz} but is still an active branch of research~\cite{chaouqui2022linear}. DD may
also be applied within the Newton iteration to solve the linearized problems in
parallel, leading to the Newton-Krylov-Schwarz methods
\cite{cai1994domain,cai1998parallel,cai1994newton, knoll2004jacobian}. The same idea has been combined with the Finite Element Tearing and Interconnecting--Dual-Primal (FETI-DP) for nonlinear
implicit problems in structural mechanics
\cite{klawonn2014nonlinear,klawonn2017nonlinear}. A third route uses DD to
construct a preconditioned nonlinear system: ASPIN
\cite{cai2002nonlinearly,cai2002non,hwang2007class} and its multiplicative
variant MSPIN \cite{liu2015field} solve the preconditioned equation directly,
whereas RASPEN \cite{dolean2016nonlinear} supplies a preconditioner for the
exact Newton scheme.

Eigenvalue problems introduce a further difficulty even in the linear case,
since the normalization constraint and the coupling between eigenvector and
eigenvalue lend the problem a nonlinear character. Consequently, DD is most
commonly used indirectly: an outer eigensolver such as inverse iteration,
shift-and-invert Arnoldi, or Jacobi-Davidson requires repeated linear solves, for which DD supplies the preconditioner (see
\cite{hwang2010parallel,zhao2016parallel,wang2019convergence,wang2018two} for
the combination with Jacobi-Davidson or~\cite{theisenScalableTwoLevelDomain2024} for gradient-based methods). The decomposition then acts on the linear
algebra of the eigensolver rather than on the eigenvalue problem itself.
Genuinely decomposed eigensolvers, posing local eigenvalue or minimization
problems on the subdomains, are rare. An early contribution is
due to Maliassov \cite{maliassov1998schwarz}, who constructed multiplicative
and additive Schwarz analogues for the principal eigenpair of a symmetric
elliptic operator by minimizing the Rayleigh quotient successively in
overlapping subspaces associated with the subdomains; the multiplicative
version is also considered in \cite{chan2002subspace}. The method of
\cite{kalantzis2020domain} instead adapts the discrete Schur complement to the eigenvalue setting and is thus related to the Feshbach-Schur
method~\cite{bach1998quantum,dusson2021feshbach}.

The \emph{goal} of this paper is to introduce a general framework for domain
decomposition based solely on energetic considerations. Our central
observation is that a large class of problems, linear and nonlinear, source and
eigenvalue alike, can be recast as the Euler-Lagrange equations of an
underlying energy functional, possibly subject to a constraint, and domain decomposition can be applied directly at the level of the energy functional rather than at the level of the PDE (or it's weak formulation). The shift from an operator-based to an energy-based decomposition allows for the simultaneous consideration of problem classes that have so far been treated as fundamentally distinct. An energy-based approach also has the advantage that the energy functional itself provides the mechanism for combining previous iterates with local updates, which is conceptually simpler than explicitly imposing transmission conditions on subdomains.

The method is a two-level optimization scheme: at each iteration, local
subproblems are solved in parallel, followed by a global second-level
optimization. A first distinctive feature is that no local boundary conditions
are imposed. Instead, in the spirit of Maliassov~\cite{maliassov1998schwarz,heid2021gradient} but not restricted to eigenvalue problems, the previous global iterate is built into the local approximation spaces; since this function is the same for
all subproblems, it causes no computational burden. The second-level problem
then minimizes the energy over the span of the local updates together with the
past iterates. This is the second distinctive feature: previous solutions are
naturally embedded in the framework, similar in spirit to LOBPCG
\cite{knyazev2001toward} for eigenvalue problems. Retaining the last two iterates rather than only the
last accelerates convergence considerably for all tested model problems.

To the best of our knowledge, no existing framework provides a single,
energy-based formulation encompassing linear, nonlinear, source, and eigenvalue
problems simultaneously. We show that such a framework, the energy-minimizing
domain decomposition (EMDD) method, is feasible and develop its algorithmic realization on four model problems covering each class, and assess its practical viability through numerical experiments benchmarked against other DD-based methods. We expect the method to admit adaptation and fine-tuning to specific problems in many ways; this is not pursued here, but remarks on possible refinements are included throughout.

This paper is organized as follows. Section~\ref{sec:framework} presents the problem setting and the general framework. The four model problems and the discretization are introduced in Section~\ref{sec:modpb-and-disc}. In Section~\ref{sec:appl-mod-pb} we formulate the corresponding EMDD methods and analyze their complexity. Section~\ref{sec:numerics} reports numerical tests for all four model problems.

\section{Problem Setting and Numerical Method}
\label{sec:framework}
\noindent Let $V$ be a Hilbert space with inner product $(\cdot, \cdot)_V$ and let $\mathcal{M} \subset V$ denote a smooth embedded submanifold of $V$.
Typically, the space $V$ is a suitable Sobolev space or a finite-dimensional discretization thereof, while the manifold $\mathcal{M}$ is either $V$ itself or defined via some constraint. Regardless, we consider a general minimization problem of the form
\begin{align}\label{eq:2.1}
    u^* = \argmin_{v\in \mathcal{M}} \en(v),
\end{align}
for some $\mathscr{C}^2$-energy functional $\en: \mathcal{M} \to \R$. 

We assume that the minimization problem \eqref{eq:2.1} is well-posed in the sense that the energy functional $\en$ admits minimizers on $\mathcal{M}$ and that these minimizers are unique (possibly up to some trivial gauge invariance). The existence and uniqueness of local minimizers is typically demonstrated by imposing suitable weak lower semi-continuity and convexity conditions on the energy functional, but we shall not address these questions in further detail in the present section. Instead, following our presentation of the general numerical method, we will focus, from Section \ref{sec:2.2} onwards, on four classes of model problems, each of which involve energy functionals possessing unique (up to gauge invariance) minimizers.

In order to present our numerical algorithm, we assume that the ambient Hilbert space $V$ possesses a (not necessarily direct) decomposition
\begin{align}\label{eq:decomp}
    V = \sum_{i=1}^M V_i,
\end{align}
with each $V_i$ a Hilbert subspace of $V$. A standard example of such a decomposition-- widely used in the context of elliptic PDEs -- involves $V= H^1_0(\Omega)$ for an open, bounded set $\Omega \subset \R^d$ and $V_i = H_0^1(\Omega_i)$ where $\overset{M}{\underset{i=1}{\cup}} \Omega_i = \Omega$ is a (possibly overlapping) decomposition of~$\Omega$. Let us nevertheless emphasize that this setting is simply an example, and our proposed method is applicable to any decomposition of the form~\eqref{eq:decomp}.

The \emph{energy-minimizing domain decomposition} (EMDD) method is defined~as follows: Given the  history parameter $q\in\N=\{1, 2, \ldots\}$ and an initial guess $u^{(0)}\in \mathcal{M} \subset V$, generate iterates $u^{(k)},~ k\in \mathbb{N}$ by

\begin{enumerate}[label=(\arabic*)]
    \item[(i)] Introducing, for each $i=1, \ldots, M$, enriched local spaces 
    \begin{align}
    \label{eq:enriched_subspaces}
        \Vik := V_i + \vspan\{u^{(k-1)}\},
    \end{align}
    and solving, in parallel, the local optimization problems
    \begin{equation}
        \label{eq:EMDD-step1}
        y_i^{(k)}
        := 
        \argmin_{y_i \in \mathcal{M} \cap \Vik} \, \en(y_i).
    \end{equation}

    \item[(ii)] Introducing the coarse space 
    \begin{align*}
        \Vzk := \vspan\left\{ u^{(k-\tilde q)},\ldots,u^{(k-1)}, y_1^{(k)}, \ldots, y_m^{(k)}\right\} \quad \text{with}\quad \tilde q = \min(q,k),
    \end{align*}
    and solving a second-level optimization problem
    \begin{equation}
        \label{eq:EMDD-step2}
        u^{(k)} 
        :=
        \argmin_{u \in \mathcal{M} \cap \Vzk } \,
        \en(u).
    \end{equation}
\end{enumerate}

Consider the energy-minimizing domain decomposition algorithm defined via Equations \eqref{eq:EMDD-step1} and \eqref{eq:EMDD-step2}. Some important comments are now in order.

\begin{remark}[Structure of EMDD Local and Second-Level Minimization Problems]\label{rem:structure}
In the case when the global minimization problem \eqref{eq:2.1} is \emph{unconstrained}, i.e., $\mathcal{M}=V$, Equations \eqref{eq:EMDD-step1} and \eqref{eq:EMDD-step2} correspond to Galerkin approximations of the global problem on suitable subspaces, and thus inherit the structure of the minimization problem~\eqref{eq:2.1}.

The situation is less trivial when the global minimization problem \eqref{eq:2.1} is \emph{constrained}, i.e., $\mathcal{M} \subsetneq V$. The essential difficulty is that for a generic manifold $\mathcal{M} \subset V$ and an arbitrary subspace $W \subset V$, the set $\mathcal{M} \cap W$ may not be a submanifold of $\mathcal{M}$ or may even be empty. A sufficient condition to guarantee that $\mathcal{M}\cap W$ \emph{is} a submanifold of $\mathcal{M}$ is that $\mathcal{M}$ and $W$ intersect \emph{transversally}, i.e., $\forall x \in \mathcal{M}\cap W$, we have
\begin{align*}
    \mathcal{T}_x \mathcal{M} + W = V.
\end{align*}

Here, $\mathcal{T}_x\mathcal{M}\subset V$ denotes the tangent space of the manifold $\mathcal{M}$ at the point $x$. This condition will be the subject of further discussion in Propositions \ref{prop:fixed_points} and \ref{prop:assumptions}.
\end{remark}

\begin{remark}[Computational Complexity of the EMDD Algorithm]\label{rem:complex} 
A detailed study of the computational complexity of the EMDD for several model problems is presented in Sections \ref{sec:modpb-and-disc} and \ref{sec:appl-mod-pb}. Let us nevertheless point out that each of the $M$ local minimization problems \eqref{eq:EMDD-step1} can be solved in parallel and the second-level minimization problem is posed over the set $\Vzk \cap \mathcal{M}$ with $\Vzk$ of small dimension $\min(q, k)+M$. Provided, therefore, that the subspaces $\{V_j\}_{j=1}^M$ are \emph{localized} (see Section \ref{sec:2.4}) and that the structure of the global minimization problem \eqref{eq:2.1} is inherited by the second-level minimization \eqref{eq:EMDD-step2}, we can expect the EMDD algorithm to be computationally feasible.  
\end{remark}

\begin{remark}
The present formalism carries potential for the development of multiscale
methods. The local basis functions are not restricted to a particular
choice: multiscale finite element bases~\cite{hou1997multiscale}, bases generated by the Localized Orthogonal Decomposition method~\cite{maalqvist2014localization}, or bases obtained from local eigenvalue problems, as in GenEO~\cite{spillane2014abstract}, are equally admissible. It is not the purpose of this article, however, to propose fine-grained or specialized constructions, but rather to demonstrate the universality of the method using standard approximation spaces such as the finite element space. A structural difference with many other domain decomposition and multiscale concepts is that EMDD requires no local boundary conditions, since $u^{(k-1)}$ is contained in the local and coarse spaces.  We also note that a very similar two-level algorithm for high-dimensional optimization problem in the tensor train format has recently been proposed by one of the authors \cite{grigori2026additive}.
\end{remark}

The following proposition provides a partial answer to another natural question pertaining to the EMDD algorithm, namely, the relation between its fixed points and solutions of the minimization problem \eqref{eq:2.1}.

\begin{proposition}[Fixed Points of the EMDD Algorithm]\label{prop:fixed_points}
   Consider the minimization problem \eqref{eq:2.1}, the subspace decomposition \eqref{eq:decomp}, and the EMDD algorithm defined through Equations \eqref{eq:EMDD-step1}-\eqref{eq:EMDD-step2}.

   \begin{itemize}
       \item Assume that the minimization problem \eqref{eq:2.1} admits a unique minimizer $u_{\rm min}$. Then $u_{\rm min}$ is also a fixed point of the EMDD algorithm in the sense that
       \begin{align*}
\forall k\in \mathbb{N}\colon \qquad u^{(k-1)}= u_{\rm min} \implies u^{(k)}= u_{\rm min}.           
       \end{align*} 
    \item Conversely, let $u_{\rm fix}$ denote a fixed point of the EMDD algorithm in the sense that there exists $ k \in\mathbb{N}$ such that $u^{(k-1)}= u^{(k)}=u_{\rm fix}$,  
and assume the following:
\begin{enumerate}

    \item For each $i \in \{1, \ldots, M\}$, the local minimization problem
    \begin{align}\label{eq:fixedpoint_assum0}
        \argmin_{y_i \in \mathcal{M} \cap \Vik} \quad \text{with } \quad \Vik:= V_i + \text{span} \{u_{\rm fix}\} \qquad \text{is uniquely solvable}.
    \end{align}

    \item The decomposition \eqref{eq:decomp} satisfies $\forall i \in \{1, \ldots, M\}$ and $\forall y_i \in \mathcal{M}\cap \Vik$:
       \begin{align}\label{eq:fixedpoint_assum1}
       \mathcal{T}_{y_i} \mathcal{M} + \Vik = V, \qquad &(\text{local transversality condition})
       \end{align}       
    \item The subspace decomposition \eqref{eq:decomp} satisfies
       \begin{align}\label{eq:fixedpoint_assum2}
            \mathcal{T}_{u_{\rm fix}} \mathcal{M} = \sum_{i=1}^M \mathcal{T}_{u_{\rm fix}} \mathcal{M} \cap \Vik. \qquad &(\text{global span condition}) 
       \end{align}        
\end{enumerate}
Then $u_{\rm fix}$ is a critical point of the energy functional $\mathcal{E}$ on $\mathcal{M}$ in the sense that
       \begin{align*}
       \forall v \in \mathcal{T}_{u_{\rm fix}} \mathcal{M}\colon \qquad    \en'[u_{\rm fix}] (v)=0.
       \end{align*}
   \end{itemize}
\end{proposition}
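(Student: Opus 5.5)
The first claim follows from a minimality-and-uniqueness argument. Suppose $u^{(k-1)} = u_{\rm min}$. Then for every $i$ we have $u_{\rm min} \in \mathcal{M} \cap \Vik$. Since $\mathcal{M}\cap\Vik \subset \mathcal{M}$, the global minimality of $u_{\rm min}$ and its uniqueness force $y_i^{(k)} = u_{\rm min}$ for all $i$. (If the local problem had several minimizers, each would have energy $\en(u_{\rm min})$ and would therefore equal $u_{\rm min}$ by uniqueness, so the local minimizer is well defined.) Consequently $u_{\rm min} \in \Vzk$, since it is both a past iterate and a local update. The same argument applied to $\mathcal{M}\cap\Vzk$ then gives $u^{(k)} = u_{\rm min}$. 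Nothing beyond these set inclusions is needed.

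For the converse, set $u^{(k-1)} = u^{(k)} = u_{\rm fix}$ and $\Vik = V_i + \vspan\{u_{\rm fix}\}$. The first step is to show that $u_{\rm fix}$ is itself the local minimizer $y_i^{(k)}$ for every $i$. Since $y_i^{(k)}\in \Vzk$ and $u_{\rm fix}\in\mathcal{M}\cap\Vik$, we get the chain
\[
\en(u_{\rm fix}) = \en(u^{(k)}) \le \en(y_i^{(k)}) \le \en(u_{\rm fix}).
\]
Hence $u_{\rm fix}$ attains the minimum of $\en$ over $\mathcal{M}\cap\Vik$, and assumption \eqref{eq:fixedpoint_assum0} gives $u_{\rm fix} = y_i^{(k)}$. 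This is where uniqueness is used; in fact only the property "$u_{\rm fix}$ is a local minimizer on $\mathcal{M}\cap\Vik$" is needed below.

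Next I derive first-order conditions on each local manifold. By the local transversality condition \eqref{eq:fixedpoint_assum1} at $y_i = u_{\rm fix}$, the intersection $\mathcal{M}\cap\Vik$ is an embedded submanifold of $\mathcal{M}$ near $u_{\rm fix}$ (Remark~\ref{rem:structure}), with tangent space
\[
\mathcal{T}_{u_{\rm fix}}(\mathcal{M}\cap\Vik) = \mathcal{T}_{u_{\rm fix}}\mathcal{M}\cap \Vik.
\]
This is the standard transversal-intersection fact: the tangent space of a transversal intersection is the intersection of the tangent spaces, and the tangent space of the linear space $\Vik$ is $\Vik$ itself. Because $u_{\rm fix}$ minimizes the $\mathscr{C}^2$ functional $\en$ on this submanifold, it is a critical point there. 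Concretely, for any $v\in \mathcal{T}_{u_{\rm fix}}\mathcal{M}\cap\Vik$ I take a $\mathscr{C}^1$ curve $\gamma$ in $\mathcal{M}\cap\Vik$ with $\gamma(0)=u_{\rm fix}$ and $\gamma'(0)=v$. Then $t\mapsto \en(\gamma(t))$ has a minimum at $t=0$, so $\en'[u_{\rm fix}](v)=0$.

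Finally, the global span condition \eqref{eq:fixedpoint_assum2} lets me write any $v\in\mathcal{T}_{u_{\rm fix}}\mathcal{M}$ as $v=\sum_i v_i$ with $v_i\in \mathcal{T}_{u_{\rm fix}}\mathcal{M}\cap\Vik$. Linearity of $\en'[u_{\rm fix}]$ then gives $\en'[u_{\rm fix}](v) = \sum_i \en'[u_{\rm fix}](v_i) = 0$. The main technical point is the tangent-space identification under transversality, which requires a submersion/implicit-function argument in a possibly infinite-dimensional Hilbert setting. I will cite it as the standard transversality theorem, noting that closed subspaces of a Hilbert space are complemented, so the theorem applies. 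Here the transversality hypothesis is what rules out the degenerate situation described in Remark~\ref{rem:structure}.
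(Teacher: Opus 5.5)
Your proposal is correct and follows the paper's proof essentially step by step. You use the same inclusion argument to show that $u_{\rm min}$ is reproduced. For the converse, you identify $u_{\rm fix}$ as each local minimizer, use transversality to obtain $\mathcal{T}_{u_{\rm fix}}(\mathcal{M}\cap\Vik)=\mathcal{T}_{u_{\rm fix}}\mathcal{M}\cap\Vik$ and hence $\en'[u_{\rm fix}]=0$ on that space, and conclude with the global span condition. Your direct energy chain is just the contrapositive of the paper's contradiction argument; your remark that this chain already shows $u_{\rm fix}$ minimizes $\en$ on $\mathcal{M}\cap\Vik$ without invoking uniqueness is a correct, minor sharpening.
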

\begin{proof}
Let  $u_{\rm min} \in \mathcal{M}\cap V$ be the unique solution of the minimization problem \eqref{eq:2.1}. Setting $u^{(k-1)}= u_{\rm min}$ in the EMDD algorithm for some $k \in \mathbb{N}$, we deduce that  $\Vik:= V_i + \text{span} \{u_{\rm min}\} ~ \subset V$ for all $i \in \{1, \ldots, M\}$.
It immediately follows that 
\begin{align*}
    \forall i \in \{1, \ldots, M\}\colon \argmin_{y_i \in \mathcal{M} \cap \Vik} \, \en(y_i) = u_{\rm min}.
\end{align*}
Consequently, $u_{\rm min}\in \Vzk$ and since $u_{\rm min}$ is, by definition the unique minimizer of the energy functional $\en$ on $\mathcal{M}$, we deduce that $u^{(k)} =\argmin_{u \in \mathcal{M} \cap \Vzk } \en(u)= u_{\rm min}$. Thus, $u_{\rm min}$ is a fixed point of the EMDD algorithm as claimed.

Conversely, let $u_{\rm fix}$ be a fixed point of the EMDD algorithm. We claim that 
\begin{align*}
    \forall i \in \{1, \ldots, M\}\colon \qquad \argmin_{y_i \in \mathcal{M} \cap \Vik} \, \en(y_i) = u_{\rm fix}.
\end{align*}
Indeed, suppose $\argmin_{y_j \in \mathcal{M} \cap \Vjk} \, \en(y_j) = y_j^{(k)} \neq u_{\rm fix}$ for some $j\in \{1, \ldots, M\}$.
Thanks to our unique solvability assumption \eqref{eq:fixedpoint_assum0}, we must then have $\en(y_j^{(k)}) < \en(u_{\rm fix})$.
Since $y_j^{(k)}$ is an element of the coarse space $\Vzk$, this implies that
\begin{align}\label{eq:lem_2}
    \min_{u \in \mathcal{M} \cap \Vzk } \,
        \en(u) \leq \en(y_j^{(k)}) < \en(u_{\rm fix}),
\end{align}
which, of course violates the fact that $u^{(k-1)}=u^{(k)}=u_{\rm fix}$, i.e., the fact that $u_{\rm fix}$ is a fixed point of the EMDD algorithm. We conclude that  $\forall i \in \{1, \ldots, M\}$, it holds that
\begin{align}\label{eq:lem_3}
    \argmin_{y_i \in \mathcal{M} \cap V_{i}^{(k)}} \, \en(y_i) = u_{\rm fix},
\end{align}
as claimed. Next, we note that the local transversality condition \eqref{eq:fixedpoint_assum1} implies that \cite[Chapter II, Prop. 2.4]{lang2002introduction} each set $\mathcal{M} \cap V_{i}^{(k)}$ is a submanifold of $\mathcal{V}$, and it holds that
\begin{align*}
    \mathcal{T}_{u_{\rm fix}} \big(\mathcal{M} \cap V_{i}^{(k)}  \big)  = \mathcal{T}_{u_{\rm fix}}\mathcal{M} \cap V_{i}^{(k)}. 
\end{align*}
Since $u_{\rm fix}$ is a solution to the minimization problem \eqref{eq:lem_3}, we therefore deduce that
\begin{align*}
\forall i \in \{1, \ldots, M\}, ~ \forall v_i \in \mathcal{T}_{u_{\rm fix}}\mathcal{M} \cap V_{i}^{(k)}\colon \qquad \en'[u_{\rm fix}](v_i)=0. \hspace{2cm}
\end{align*}
Appealing now to Assumption \eqref{eq:fixedpoint_assum2} yields that $\en'[u_{\rm fix}](v)=0$ for all $v \in \mathcal{T}_{u_{\rm fix}} \mathcal{M}$.
\end{proof}

\begin{remark}[Validity of Assumptions in Proposition \ref{prop:fixed_points}]\label{rem:assumptions}
     It is natural to ask when (if at all) the assumptions of Proposition \ref{prop:fixed_points}, i.e., the local unique solvability~\eqref{eq:fixedpoint_assum0}, the local transversality condition \eqref{eq:fixedpoint_assum1} and the spanning condition \eqref{eq:fixedpoint_assum2} are satisfied. 

    Concerning the question of unique solvability, we note that if $\mathcal{M}\equiv V$, then the local minimization problem \eqref{eq:EMDD-step2} can be viewed simply as a Galerkin approximation of the global minimization problem \eqref{eq:2.1}. Consequently, if Equation \eqref{eq:2.1} admits a unique minimizer, then the unique solvability condition \eqref{eq:fixedpoint_assum0} clearly holds. For similar reasons, the condition \eqref{eq:fixedpoint_assum0} also holds (up to sign) for linear eigenvalue problems, i.e., when $\mathcal{M}\subsetneq V$ is a unit sphere and $\en\colon \mathcal{M}\to \R$ is a quadratic energy functional. On the other hand, it is not clear if condition \eqref{eq:fixedpoint_assum0} holds more generally, for instance, for \emph{nonlinear} eigenvalue problems. In the numerical experiments presented in Section \ref{sec:numerics}, we did not observe issues of non-uniqueness (discounting trivial gauge freedoms) for the nonlinear eigenvalue problem.

    Concerning the local transversality and global spanning assumptions \eqref{eq:fixedpoint_assum1}-\eqref{eq:fixedpoint_assum2}, we similarly note that these are trivially satisfied in the case $\mathcal{M}\equiv V$. Additionally, the following proposition demonstrates that these conditions are satisfied also when the constraint set $\mathcal{M}$ is a unit sphere, which is, for instance, the case for second-order elliptic eigenvalue problems.
\end{remark}

\begin{proposition}[Validity of Assumptions \eqref{eq:fixedpoint_assum1} and \eqref{eq:fixedpoint_assum2} for Unit Spheres]\label{prop:assumptions}
Assume that the Hilbert space $(V, \Vert \cdot \Vert_V)$ is continuously embedded in another Hilbert space $(H, \Vert \cdot \Vert)_H$, i.e., $V\subset H$ and there exists $C>0$ such that $\Vert v \Vert_H \leq C \Vert v \Vert_V$ for all $v \in V$. Suppose that the manifold $\mathcal{M} \subset V$ is defined as $\mathcal{M} = \left\{v \in V\colon \hspace{2mm} \Vert v \Vert_{H}=1\right\}$, let $\sum_{i=1}^M V_i =V$ be a subspace decomposition of $V$, let $w \in \mathcal{M}$ be arbitrary, and for any $i \in \{1, \ldots, M\}$ define $V_{i, w}= V_i + \text{\rm span}\{w\}$. Then, for all $i \in \{1, \ldots, M\}$ and  $v_i \in \mathcal{M}\cap V_{i, w}$ it holds that
\begin{align}\label{eq:prop_2}
     \quad \mathcal{T}_{v_i} \mathcal{M} + V_{i, w} = V, \qquad \text{and}\qquad \mathcal{T}_{w} \mathcal{M} = \sum_{i=1}^M \mathcal{T}_{w} \mathcal{M} \cap V_{i, w}.
\end{align}
\end{proposition}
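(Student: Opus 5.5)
The plan is to make everything explicit using the fact that the tangent space of the unit sphere at $v\in\mathcal{M}$ is the $H$-orthogonal complement of $v$ in $V$:
\begin{align*}
\mathcal{T}_v\mathcal{M} = \{ z \in V\colon (z,v)_H = 0\}.
\end{align*}
This identification rests on the map $g(v)=\Vert v\Vert_H^2-1$, which is smooth on $V$ because of the continuous embedding $V\subset H$. Its derivative $g'[v](z)=2(v,z)_H$ is a bounded linear functional on $V$. It is surjective onto $\R$ at every $v\in\mathcal{M}$, since $g'[v](v)=2\neq 0$. Hence $\mathcal{M}$ is a regular level set, i.e.\ a smooth embedded submanifold, and its tangent space is $\ker g'[v]$, which is a closed hyperplane of $V$ complemented by $\vspan\{v\}$.

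\textbf{Local transversality.} Fix $i$ and $v_i\in \mathcal{M}\cap V_{i,w}$. For any $z\in V$, decompose
\begin{align*}
z = \bigl(z-(z,v_i)_H\, v_i\bigr) + (z,v_i)_H\, v_i.
\end{align*}
The first term lies in $\mathcal{T}_{v_i}\mathcal{M}$ because $\Vert v_i\Vert_H=1$. The second term lies in $V_{i,w}$ because $v_i\in V_{i,w}$. Therefore $V=\mathcal{T}_{v_i}\mathcal{M}+V_{i,w}$. This step needs only that the point itself lies in the subspace.

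\textbf{Global span.} The inclusion $\supseteq$ is trivial. For $\subseteq$, take $z\in \mathcal{T}_w\mathcal{M}$ and use \eqref{eq:decomp} to write $z=\sum_i z_i$ with $z_i\in V_i$. The $z_i$ need not be tangent, so correct each one by a multiple of $w$:
\begin{align*}
\tilde z_i := z_i - (z_i,w)_H\, w.
\end{align*}
Then $\tilde z_i\in V_i+\vspan\{w\}=V_{i,w}$ and $(\tilde z_i,w)_H=0$, so $\tilde z_i\in \mathcal{T}_w\mathcal{M}\cap V_{i,w}$. Summing gives
\begin{align*}
\sum_i \tilde z_i = z - \Bigl(\sum_i (z_i,w)_H\Bigr) w = z - (z,w)_H\, w = z,
\end{align*}
since $z$ is tangent at $w$. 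This proves the second identity in \eqref{eq:prop_2}.

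The only delicate point is the identification of $\mathcal{T}_v\mathcal{M}$ as the $H$-orthogonal complement within $V$. This requires the continuous embedding, so that $(\cdot,v)_H$ is a bounded functional on $V$ and the regular value theorem applies in the Hilbert-manifold setting, e.g.\ \cite{lang2002introduction}. Once that is settled, both identities are the algebraic decompositions above. The enrichment by $w$, and by the point $v_i$ itself, is exactly what absorbs the normal component.
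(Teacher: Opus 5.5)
Your proposal is correct and follows essentially the same route as the paper. For transversality you use the same splitting $z=\bigl(z-(z,v_i)_H v_i\bigr)+(z,v_i)_H v_i$, and for the global span condition you use the same correction of each $z_i\in V_i$ by $(z_i,w)_H\, w$. Your regular-value-theorem justification of $\mathcal{T}_v\mathcal{M}=\{z\in V\colon (z,v)_H=0\}$ is a detail the paper simply takes as clear.
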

\begin{proof}
    We first prove the identity appearing on the left-hand side of Equation \eqref{eq:prop_2}. Obviously, $\mathcal{T}_{v_i} \mathcal{M} + V_{i, w} \subset V$ so let us demonstrate the converse inclusion. To this end, let $i \in \{1, \ldots, M\}$ and $v_i \in \mathcal{M} \cap V_{i, w}$ be given. Clearly, we have $\mathcal{T}_{v_i}\mathcal{M}= \left\{u \in V \colon (u, v_i)_{H}=0\right\}$. Let now $v \in V$ be arbitrary and consider the decomposition
\begin{align*}
    v= \big(v- (v_i, v)_{H}\; v_i\big) + \big((v_i, v)_{H}\;v_i\big) =: v^\perp + v^{\parallel}.
\end{align*}
Since $v^{\perp} \in \mathcal{T}_{v_i}\mathcal{M}$ and $v^{\parallel} \in V_{i, w}$, we clearly have $v \in \mathcal{T}_{v_i} \mathcal{M} + V_{i, w}$ so that the first identity in Equation~\eqref{eq:prop_2} is indeed true.
We now turn our attention to the second identity in Equation \eqref{eq:prop_2}. Since we obviously have 
\begin{align*}
 \sum_{i=1}^M \mathcal{T}_{w} \mathcal{M} \cap V_{i, w} \subset \mathcal{T}_{w} \mathcal{M}, 
\end{align*}
let us demonstrate the converse inclusion. To this end, let us observe that 
\begin{align*}
    \mathcal{T}_{w}\mathcal{M}= \left\{u \in V \colon (u, w)_{H}=0\right\} 
    \quad \text{and} \quad \mathcal{T}_{w}\mathcal{M} \cap V_{i, w}= \left\{u_i \in V_{i, w} \colon (u, w)_{H}=0\right\}.
\end{align*}
Consider now $v \in \mathcal{T}_{w}\mathcal{M}$. We can clearly write $v= \sum_{i=1}^M v_i$ with each $v_i \in V_i$. Additionally, each of these functions $v_i$ can be decomposed as 
\begin{align*}
    v_i = \big(v_i - (w, v_i)_H \; w\big) + \big((w, v_i)_H \; w\big)=:v_i^\perp + v_i^\parallel, \quad  \text{with } ~v_i^\perp\in \mathcal{T}_{w}\mathcal{M} \cap V_{i, w}.
\end{align*}
It now suffices to notice that we can write
\begin{align*}
    v=\sum_{i=1}^M v_i^\perp + \sum_{i=1}^M v_i^\parallel= \sum_{i=1}^M v_i^\perp  +\big(w,  \sum_{i=1}^M v_i\big)_H \; w&=\sum_{i=1}^M v_i^\perp  +\big(w, v\big)_H \; w
    =\sum_{i=1}^M v_i^\perp,
\end{align*}
with the last step following from the fact that $v \in \mathcal{T}_{w}\mathcal{M}$. Thus, $v \in \sum_{i=1}^M \mathcal{T}_{w} \mathcal{M} \cap V_{i, w}$ as required, and this completes the proof of the second identity.
\end{proof}

\section{Model problems, discretization, and complexity}
\label{sec:modpb-and-disc}
Four representative model problems will be introduced in this section together with a general discretization framework based on abstract assumptions that are all fulfilled in a finite element context.

\subsection{Presentation of model problems}\label{sec:2.2}
Throughout this section and in the sequel, we denote by
\begin{itemize}
    \item $\Omega \subset \R^d$ for $d \in \{1, 2, 3\}$ a bounded, Lipschitz domain and define $V:= \rH_0^1(\Omega)$;
    \item $a:V\times V \to \R$, a symmetric, continuous, coercive bilinear form;
    \item $f:V\to \R$, a bounded linear functional;
   \item $G_{\rm src}\in C^1\big([0, \infty)\big)$, a real-valued function that satisfies
    \begin{align} \label{eq:irregular}
       [0, \infty) \ni t \mapsto G_{\rm src}'(t) \sqrt{t} \text{ is  non-decreasing}, \qquad \text{and} \\[0.5em] \nonumber
        \exists q_{\rm src}\in [0, 2], C>0, \forall t\in (0, \infty) \colon \qquad |G_{\rm src}'(t)|&\leq C(1+  t^{q_{\rm src}}).
    \end{align}
    \item $G_{\rm eig}\in C^1\big([0, \infty)\big)\cap C^2\big((0, \infty)\big)$,  real-valued function that satisfies
    \begin{align}\label{eq:irregular_eig}
        G_{\rm eig}''> 0 \text{ on } (0, \infty), \quad G_{\rm eig}''(t)t \text{ is bounded around }t&=0, \qquad \text{and}    \\[0.5em] \nonumber
        \exists q_{\rm eig}\in [0, 2), C>0, \forall t \in (0, \infty) \colon \qquad |G_{\rm eig}'(t)|&\leq C(1+ t^{q_{\rm eig}}).
    \end{align}    
\end{itemize}
The model problems that we will focus on in the sequel are now defined as follows:
\begin{enumerate}
    \item \textbf{Coercive linear source problems.}
    We set $\mathcal{M}=V:= \rH_0^1(\Omega)$ and consider
    \begin{equation}
        \label{eq:linear_sys}
        \forall u \in \mathcal{M}\colon \qquad       \en(u) := \tfrac12 a(u,u) - f(u).
    \end{equation}
    The Euler-Lagrange equation associated with Equation \eqref{eq:linear_sys} is given by
    \[
        a(u^*,v) = f(v) \qquad\forall v\in V.
    \]
    It follows from the Lax-Milgram lemma that Equation \eqref{eq:linear_sys} admits a unique minimizer which solves the above Euler-Lagrange equations.\vspace{3mm}

    \item \textbf{Monotone semi-linear source problems.}
    We set $\mathcal{M}=V:= \rH_0^1(\Omega)$ and consider
    \begin{equation}
        \label{eq:nonlinear_sys}
        \forall u \in \mathcal{M}\colon \qquad       \en(u) := \tfrac12 a(u, u) + \frac12\int_{\Omega} G_{\rm src}(u^2(\bold{x}))\; d \bold{x}- f(u).
    \end{equation}
    The Euler-Lagrange equation associated with Equation \eqref{eq:nonlinear_sys} is given by
    \begin{align*}
        a(u^*,v) + \int_{\Omega} G_{\rm src}'(u^*(\bold{x})^2)u^*(\bold{x})v(\bold{x})\; d \bold{x}= f(v) \qquad\forall v\in V.
    \end{align*}
    Thanks to the sufficient conditions \eqref{eq:irregular}, we can deduce from the Browder-Minty theorem that Equation \eqref{eq:nonlinear_sys} admits a unique minimizer which solves the above Euler-Lagrange equations. 
    
    \item \textbf{Linear eigenvalue problems.}
    We take   $\mathcal{M} := \big\{ v \in V \colon \; \Vert v \Vert_{L^2(\Omega)}^2 =1\big\}$,
    and consider
    \begin{equation}
        \label{eq:linear_eig}
        \forall u \in \mathcal{M}\colon \qquad      \en(u) :=a(u,u).
    \end{equation}
    Here, the coercivity condition on $a$ can be relaxed to the requirement that the bilinear form satisfies a Gårding inequality.
    The Euler-Lagrange equation associated with Equation \eqref{eq:linear_eig} is given by
    \begin{align*}
        a(u^*,v) = \lambda (u^*, v)_{L^2(\Omega)} \qquad\forall v\in V, \quad \text{with } \Vert u^*\Vert_{L^2(\Omega)}=1.
    \end{align*}
    Clearly, Equation \eqref{eq:linear_eig} admits minimizers and these minimizers correspond to the ground-state (lowest) eigenfunctions of the above linear eigenvalue problem. We assume that this eigenvalue is non-degenerate.\vspace{3mm}
    
    \item \textbf{Nonlinear eigenvalue problems.}
    We take again $\mathcal{M}:= \big\{ v \in V \colon \; \Vert v \Vert_{L^2(\Omega)}^2 =1\big\}$, and consider
    \begin{equation}
        \label{eq:nonlinear_eig}
        \forall u \in \mathcal{M}\colon \qquad    {\en}(u) := \tfrac{1}{2} a(u, u) + \frac12\int_{\Omega} G_{\rm eig}(u^2(\bold{x}))\; d \bold{x}.
    \end{equation}
    As before, the coercivity condition on $a$ can be replaced with a Gårding inequality. Regardless, the Euler-Lagrange equations associated with Equation \eqref{eq:nonlinear_eig} are given by finding $u^*\in\mathcal M$ such that
    \begin{align*}
        a(u^*,v) + \int_{\Omega} G_{\rm eig}'(u^*(\bold{x})^2)u^*(\bold{x})v(\bold{x})d \bold{x} = \lambda (u^*, v)_{L^2(\Omega)} \qquad\forall v\in V.
        \remove{\quad \text{with } \Vert u^*\Vert_{L^2(\Omega)}=1.}
    \end{align*}
  Under the assumptions \eqref{eq:irregular_eig}, it has been proven in \cite{cances2010numerical}, that Equation \eqref{eq:nonlinear_eig} admits a unique (up to sign) minimizer.
\end{enumerate}
\medskip

In view of Remark~\ref{rem:assumptions} and Proposition~\ref{prop:assumptions}, we see that--with the possible exception of unique solvability (excluding gauge freedoms) of the \emph{discretized} nonlinear eigenvalue problem--the model problems  \eqref{eq:linear_sys}-\eqref{eq:nonlinear_eig} satisfy the assumptions of Proposition \ref{prop:fixed_points}. For each of these problems, we will describe in more detail the structure of the EMDD local and second-level minimization problems and discuss the computational complexity of the EMDD algorithm. This is the subject of Section \ref{sec:appl-mod-pb}. Prior to doing so however, we introduce, in the forthcoming Sections \ref{sec:2.3} and \ref{sec:2.4},  some common notation and definitions that will assist our exposition.

\subsection{General framework for Galerkin discretization}\label{sec:2.3}

In the numerical practice, the model problem~\eqref{eq:2.1} with energies defined as in \eqref{eq:linear_sys}-\eqref{eq:nonlinear_eig}, which are a priori formulated on $V\equiv H_0^1(\Omega)$, are discretized and solved on an $N$-dimensional subspace $\Vapp \subset H_0^1(\Omega)$. Thanks to the variational principle, this amounts to replacing the space $V$ with $\Vapp$ in our earlier exposition but leaves the structural setting of the model problems intact. Thus, we still assume that 
\begin{itemize}
    \item The approximation space $\Vapp$ possesses a subspace decomposition $\Vapp= \sum_{i=1}^M V_i$ (c.f., Equation \eqref{eq:decomp}) but now with each subspace $V_i$ of dimension $N_i \in \mathbb{N}$. \smallskip
    \item  Additionally, we denote by $\{\phi_{n}\}_{n=1}^N$ a basis for $\Vapp$ and by $\{\phi^{i}_{n}\}_{n=1}^{N_i}$ a basis for any subspace $V_i, ~ i \in \{1, \ldots, M\}$. 
\end{itemize}

\vspace{2mm}

Our goal now is to describe in more detail the discretized bilinear form $a$, nonlinear mappings $G_{\rm src}$ and $G_{\rm eig}$ and right-hand-side vector $f$ that result from this finite-dimensional setting, and are used to formulate Equations \eqref{eq:EMDD-step1}-\eqref{eq:EMDD-step2} in the EMDD algorithm. Since the subsequent exposition requires considerable notation, we briefly list some conventions we adopt in the use of indices. 
\begin{convention}[Index Conventions]
    In the remainder of this article:
\begin{itemize}
    \item We use $i,j \in \{1,\ldots,M\}$ for indices that label either subspaces or subdomains;

    \item We use $k,\ell$ for indices that label the iteration number of the EMDD algorithm;

    \item We use $m,n \in \{1,\ldots,N\}$ for indices that label basis functions;
    
    \item We use $N_i$ to denote the number of basis functions in the $i^{\rm th}$ subspace.
\end{itemize}
\end{convention}

\begin{convention}
    In the sequel, we will frequently consider vectors and matrices whose definition depends on the EMDD iteration index $k$ via the previous EMDD iterates $u^{(\ell)}$, for $\ell=\tilde q,\ldots, k-1$. For the sake of a lighter notation, this dependence will frequently not be indicated explicitly via superscripts or subscripts.
\end{convention}

\begin{definition}[discretized operators for the local minimization steps]\label{def:loc_matrix}
Consider the EMDD algorithm defined through Equations \eqref{eq:EMDD-step1}-\eqref{eq:EMDD-step2}.  Let $i \in \{1, \ldots, M\}$, $k \in \{1,\ldots,\}$, and denote $\phi^{i}_{N_i+1}:= u^{(k-1)}$.

At iteration $k$ of the EMDD algorithm, we define matrices $\Ai, \Si \in \R^{(N_i+1) \times (N_i+1)}$: 
\begin{align}
    \label{eq:AS_matrix_def}
    \Ai_{mn} := a(\phi^{i}_{m}, \phi^{i}_{n})
    \qquad\textrm{and}\qquad
    \Si_{mn} := (\phi^{i}_{m}, \phi^{i}_{n})_{L^2(\Omega)},
\end{align}
for all $m,n \in \{1, \ldots, N_i+1\}$.
Additionally, given any $u \in \Vik$ that is represented by ${\bf u}\in \R^{N_i+1}$, we define  $\Ni({\bf u}), \Hi({\bf u}) \in \R^{(N_i+1) \times (N_i+1)}$ as the matrices with the property that for all $m,n \in \{1, \ldots, N_i+1\}$, $G\in \{G_{\rm src},G_{\rm eig}\}$,
\begin{align}
    \label{eq:AS_nonlinear_def}
     \Ni_{mn}({{\bf u}}) &= \int_{\Omega}G'(u^2(\bold{x})) \phi^{i}_m(\bold{x}) \phi^{i}_n(\bold{x})\; d \bold{x},
    \\
    \label{eq:H_local_def}
    \Hi_{mn}({{\bf u}}) &= \int_{\Omega} \Big(G'(u^2(\bold{x})) + 2 \, G''(u^2(\bold{x}))u^2(\bold{x})\Big) \phi^{i}_m(\bold{x}) \phi^{i}_n(\bold{x})\; d \bold{x}.
\end{align}
Finally, we define the vector $\fiv\in \R^{N_i+1}$ as
\begin{align}
    \label{eq:fN_vector_def_loc}
    \forall m\in \{1,\ldots,N_i +1\}\colon \qquad    \fiv_m= f(\phi^{i}_{m}).
\end{align}
\end{definition}

\vspace{2mm}

\begin{definition}[discretized operators for the second-level minimization step]\label{def:global_matrix}
Consider the EMDD algorithm defined through Equations \eqref{eq:EMDD-step1}--\eqref{eq:EMDD-step2}. Let $k \in \{1, \ldots, \}$ and let the set $\{\chi_{n}\}_{n=1}^{\tilde q  +M}$ be defined as
\begin{align*}
    \chi_n := 
    \begin{cases}
       u^{(k-n)} \quad &\text{for }~ n =1, \ldots, \tilde q
       \\
        y_{n -\tilde q}^{(k)} \quad &\text{for }~ n =\tilde q +1, \ldots, \tilde q  +M,
    \end{cases}
\end{align*}

At iteration $k$ of the EMDD algorithm, we define matrices $\Az, \Sz \in \R^{(\tilde q  +M) \times (\tilde q  +M)}$:
\begin{align}
    \label{eq:A0S0_matrix_def} 
    \Az_{mn}= a(\chi_m, \chi_n)
    \qquad\textrm{and}\qquad
    \Sz_{mn}= (\chi_m, \chi_n)_{L^2(\Omega)},
\end{align}
for all $m,n \in \{1,\ldots,\tilde q+M\}$.
Additionally, given any $u \in V^k_0$ that is represented by ${\bf u}\in \R^{\tilde q+M}$, we define $\Nz({\bf u}), \Hz({\bf u}) \in \R^{(\tilde q+M) \times (\tilde q+M)}$ as the matrices with the property that for all $m,n \in \{1, \ldots, \tilde q+M\}$, $G\in \{G_{\rm src},G_{\rm eig}\}$,
\begin{align}\label{eq:N0_matrix_def} 
    \Nz_{mn}({{\bf u}}) &= \int_{\Omega}G'(u^2(\bold{x})) \chi_m(\bold{x}) \chi_n(\bold{x})\; d \bold{x}, 
    \\
    \label{eq:H_coarse_def}
    \Hz_{mn}({{\bf u}}) &= \int_{\Omega} \Big(G'(u^2(\bold{x})) + 2 \, G''(u^2(\bold{x}))u^2(\bold{x})\Big) \chi_m(\bold{x}) \chi_n(\bold{x})\; d \bold{x}.
\end{align}
Finally, we define the vector $\fz\in \R^{\tilde q+M}$ as
\begin{align}
    \label{eq:fN_vector_def_global}
    \forall m \in \{1,\ldots,\tilde q+M\}\colon \qquad       \fz_m= f(\chi_m)
\end{align}
\end{definition}

\vspace{2mm}

Having introduced these Euclidean vectors and matrices and recalling the local and global basis functions $\{\phi^i_n\}_{\substack{i=1, \ldots,M\\ n=1, \ldots, N_i}}$ and $\{\phi_n\}_{n=1}^N$ respectively, we can represent the solutions to the two optimizations problems \eqref{eq:EMDD-step1}--\eqref{eq:EMDD-step2}
\begin{align}
    \label{eq:sol-rep-1}
    y_i^{(k)} &= \sum_{n=1}^{N_i+1} \yi_n \phi^{i}_{n}
    =
    \sum_{n=1}^{N_i} \yi_n \phi^{i}_{n}
    +
    \yi_{N_i+1} u^{(k-1)}
    \\
    \label{eq:sol-rep-2}
    u^{(k)} &= \sum_{n=1}^{\tilde q+M} {\bf{u}}^{(k)}_n \chi_n
    =
    \sum_{n=1}^{\tilde q} {\bf{u}}^{(k)}_n u^{(k-n)} + \sum_{j=1}^{M} {\bf{u}}^{(k)}_{\tilde q + j} y_j^{(k)}     
    =
    \sum_{n=1}^N {\bf{U}}_n^{(k)} \phi_n,
\end{align}
with $\yi\in \R^{N_i+1}$, ${\bf{u}}^{(k)}\in \R^{\tilde q + M}$ and ${\bf{U}}^{(k)}\in \R^{N}$.

\begin{remark}
    In practice, it is advised to orthogonalize the basis of the coarse space $\Vzk$ by means of a QR-decomposition to ensure numerical stability and possibly to remove (near) linear dependency. This becomes highly relevant near convergence when a $y_i^k$ consists of $u^{(k-1)}$ plus a very small correction.
    However, we omit this numerical trick here for simple presentation of the method. 
\end{remark}

\vspace{2mm} 

\subsection{General framework for complexity analysis}\label{sec:2.4}

Our purpose in this section is to set up a general framework allowing us to study the computational cost of the EMDD algorithm as applied to the four classes of model problems \eqref{eq:linear_sys}-\eqref{eq:nonlinear_eig}. The reason such a complexity analysis is important is that a naive study of the EMDD algorithm \eqref{eq:EMDD-step1}-\eqref{eq:EMDD-step2}, in particular the use of global history vectors, might suggest that the cost of the algorithm scales as $\mathcal{O}(MN)$. We will show that--thanks to some tricks--the actual scaling of EMDD is considerably better, and does not include an $\mathcal{O}(MN)$ term. Let us also remark that the final computational scaling that we provide for each model problem (see Section \ref{sec:appl-mod-pb}) will be based on the use of iterative solvers for each step of the EMDD algorithm. Depending on the number of subdomains and the dimensions of the local subspaces, it might be advisable to use instead direct solvers, in which case, the final scaling can easily be deduced from the results of this section.

To begin with, we must specify more precisely the computational cost of constructing quantities of the form
\[
    \int_\Omega c(\bold{x}) \mathcal L(u)(\bold{x}) \mathcal L(v)(\bold{x}) \; d\bold{x}, \qquad \text{and} \qquad f(u). 
\]
Here, $c$ is a real-valued function, $f\colon V_{\rm approx}\rightarrow \R$ denotes some given linear form (appearing, e.g., in Equations \eqref{eq:linear_sys} and \eqref{eq:nonlinear_sys}), $u, v \in V_{\rm approx}$ and $\mathcal L$ denotes a suitable operator on $V_{\rm approx}$ such as the gradient or the identity.

In this framework, and recalling that we denote by $\{\phi_n\}_{n=1}^N$ and $\{\phi_n^{i}\}_{n=1}^{N_i}$ bases for $V_{\rm approx}$ and $V_i, ~ i \in \{1, \ldots, N_i \}$, complexity statements are made primarily with respect to $M$, $N$ and $N_i$.  Other quantities are mentioned explicitly if important. Moreover, we assume for the purpose of the present contribution that

\begin{itemize}[leftmargin=0.9cm]
    \item[(C1)] \textbf{Function evaluation.}
For any function $v$ in $\Vapp$, $V_i$, $\Vik$ or $\Vzk$ whose degrees of freedom with respect to the corresponding basis are explicitly given, a single point evaluation $v({\bf x})$ or $v^2({\bf x})$ costs $\comp{1}$.

    \item[(C2)] 
    \textbf{Linear form.}
    Each quantity $f(\phi_n)$ and $f(\phi_n^{i})$ can be computed in $\mathcal O(1)$.
    \item[(C3)] 
    \textbf{Bilinear form I.}
    For any $u\in\Vapp$ whose degrees of freedom with respect to the basis $\{\phi_n\}_{n \in \mathbb{N}}$ are explicitly given, the computation of quantities of the form
    \[
    \int_\Omega c(\bold{x}) \mathcal L( \phi_n)(\bold{x}) \mathcal L(u)(\bold{x}) \; d\bold{x} 
    \qquad\mbox{and}\qquad
    \int_\Omega c(\bold{x}) \mathcal L( \phi_n^{i})(\bold{x}) \mathcal L(u)(\bold{x}) \; d\bold{x} 
\]
scales for all $i=1,\ldots,M$ as $\mathcal O(N_c)$ where $N_c$ is the cost of evaluating $c({\bf{x}})$ at a single point ${\bf{x}} \in \Omega$.

\item[(C4)]
\textbf{Bilinear form II.}
For any $u, v \in V_{\rm approx}$ and any $u_i, v_i \in V_i$ whose degrees of freedom with respect to the basis $\{\phi_n\}_{n \in \mathbb{N}}$ and $\{\phi^i_n\}_{n \in \mathbb{N}}$ respectively are explicitly given, the computation of quantities of the form
    \begin{align*}
        \int_\Omega c(\bold{x}) \mathcal L( u)(\bold{x}) \mathcal L(v)(\bold{x}) \; d\bold{x}  \quad \text{and} \quad \int_\Omega c(\bold{x}) \mathcal L( u_i)(\bold{x}) \mathcal L(v_i)(\bold{x}) \; d\bold{x}, 
    \end{align*}
scales as $\mathcal O(N_c N)$ and $\mathcal{O}(N_c N_i)$ respectively. As before, $N_c$ denotes the cost of evaluating $c({\bf{x}})$ at a single point ${\bf{x}} \in \Omega$.

\item[(C5)] 
\textbf{Intersection of subspace decomposition.}
For each $i=1,\ldots,M$ the number of subspaces $j=1,\ldots,M$ such that $ V_i \cap V_j \not = \emptyset$ is at most~$L$ with $L=\comp{1}$. In terms of domain decomposition, $L$ denotes the maximal overlap multiplicity of subdomains~$\Omega_i$.

\item[(C6)] 
\textbf{Identification of dominant parameters.}
There holds $q\le \min(M,\Nloc)$ for $\Nloc:=\max_{i=1,\ldots,M} N_i$.

\end{itemize}

\begin{remark}
Note that Assumptions (C1)-(C4) are naturally satisfied for finite element discretizations for local PDEs, thus with local operators $\mathcal L$.
Assumption (C5) is standard in the context of domain decomposition and sometimes referred to the maximal overlap condition. 
Finally, assumption (C6) is realistic since, as we shall see in the upcoming numerical tests, $q=2$ seems to be the favorite choice for $q$.
\end{remark}

Equipped with the above conventions and assumptions, we can now discuss the computational complexity related to operations involving the vectors $\fiv$, the matrices $\Ai$ and the nonlinear operators $\Ni({\bf U})$ in the local and second-level optimization steps of the EMDD method.

\subsubsection{The vectors \texorpdfstring{$\fiv$}{fi} and \texorpdfstring{$\fz$}{fz}}
Recalling the definition
\begin{equation}
    \fiv_n = f(\phi^{i}_{n}) = 
    \begin{cases}
       f\big(\phi^{i}_{n}\big) & \text{for } n \in \{1, \ldots, N_i\},
       \\[1em]
       f\big(u^{(k-1)}\big) & \text{for } n = N_i+1,
   \end{cases}
\end{equation}
we observe that we need assemble $f\big(u^{(k-1)}\big)$ only once per iteration $k$ and that this construction can be shared among all subdomains $i=1,\ldots,M$. Relying on (C2), we conclude that the cost of assembling all $M$ vectors ${\bf{f}}^{1},\ldots,{\bf{f}}^{M}$ scales as $\mathcal{O}(N+M\Nloc)$ at the first iteration $k=1$. The cost of subsequent updates related to the computation of $f\big(u^{(k-1)}\big)$ for iterations $k>1$ scales as $\mathcal O(N)$.

\medskip

Next, recalling the definition
\begin{align}
    \fz_n &= f(\chi_n) 
    =
    f\big(u^{(k-n)}\big) 
    \\
    \fz_{\tilde q+n} &= f(\chi_{\tilde q+n}) 
    = f\big(y^{(k)}_{n}\big) 
    = \sum_{m=1}^{N_{n}} {\bf{y}}^{n}_m f\big(\phi^{n}_{m}\big)
        +
        {\bf{y}}^{n}_{N_{n}+1} f\big(u^{(k-1)}\big)
\end{align}
for $n \in \{1, \ldots, \tilde q\}$, $n \in \{ 1,\ldots,M\}$, and assuming that $f\big(u^{(k-j)}\big)$, $j=1,\ldots,\tilde q$, is precomputed, we see that the cost of assembling the vector $\fz$ scales as $\mathcal{O}(M N_{\rm loc})$.

\subsubsection{The matrices \texorpdfstring{$\Ai$}{Ai} and \texorpdfstring{$\Az$}{Az}}
\label{sssec:matrices}
We assume that at iteration $k$ of the EMDD method, the coefficients $\tilde{\bf A}_{\ell\ell'}=a(u^{(\ell)},u^{(\ell')})$, for $\ell,\ell'\in \{k-\tilde q,\ldots,k-1\}$ are available. Indeed, while the cost of a brute-force calculation for each $\tilde{\bf A}_{\ell\ell'}$ scales, by assumption (C4), as $\mathcal O (N)$, one can use the variational framework to compute such terms cheaply, as we discuss in Section \ref{sec:appl-mod-pb}.

\medskip

Equipped with this assumption, we now turn our attention to the local minimization problems in the EMDD algorithm. In order to solve these local problems involving $\Ai \in \R^{(N_i +1) \times (N_i+1)}$, it is typical to employ matrix-free methods such as Krylov solvers. We therefore study the cost of a single matrix vector product of the form $\bold{v} \mapsto \Ai \bold{v}$ defined through Equation \eqref{eq:AS_matrix_def}.

Recall the definition of the basis functions $\{\phi^{i}_j\}_{j=1}^{N_i+1}$ and observe that by definition
\begin{align*}
   \big(\Ai \bold{v}\big)_{n} =  
   \begin{cases}
   a\big(\phi^{i}_{n},  y^{(i)}\big)+{\bf v}_{N_i+1} a\big(\phi^{i}_{n}, u^{(k-1)}\big) 
   & \text{for } n \in \{1, \ldots, N_i\}\\[1em]
   \sum_{m = 1}^{N_i} {\bf v}_m
    a\big(u^{(k-1)}, \phi^{i}_{m}\big)+{\bf v}_{N_i+1} \tilde{\bf A}_{11}
   & \text{for } n = N_i+1,
   \end{cases}
\end{align*}
where $y^{i} = \sum_{m = 1}^{N_i} {\bf v}_m\phi^{i}_{m} \in V_i$ is known.
Since it is supposed that $\tilde{\bf A}_{(k-1)(k-1)} = a\big(u^{(k-1)}, u^{(k-1)}\big)$ is precomputed, we can appeal to the locality of the bilinear form $a$ via (C3) to deduce that the cost of such a matrix vector product scales as $\mathcal{O}(N_i)$.

\medskip

Turning now to the second-level minimization step in the EMDD algorithm, we first observe that the history parameter $q \in \mathbb{N}$ is typically very small compared to ${\rm dim} \Vapp$. 
If the number of subdomains $M\in \mathbb{N}$ is also very small compared to ${\rm dim} \Vapp$, one can employ dense linear algebra algorithms in this step. We thus study the cost of assembling the matrix
$\Az  \in \R^{(\tilde q + M) \times (\tilde q + M)}$ defined through Equation~\eqref{eq:A0S0_matrix_def}.
To this end, and for $y^{i} := \sum_{n = 1}^{N_i} {\bf y}_n^{i} \phi^{i}_{n} \in V_i$, it is efficient to precompute ${a}_i:=a\big(u^{(k-1)},y^{i}\big)$ for each $i=1,\ldots,M$ in $\comp{M\Nloc}$ using once again the locality of the bilinear form~$a$ via (C3).
Then, recalling the definition of the basis functions $\{\chi_n\}_{n=1}^{\tilde q + M}$, we see that each entry of the matrix $\Az$ belongs to one of three cases:
\begin{description}
    \item[Case One] We must compute a term of the form 
    \begin{align*}
     a(u^{(k-m)}, u^{(k-n)}) = \tilde{\bf A}_{(k-m)(k-n)} \quad \text{for some } m,n \in \{1,\ldots,\tilde{q}\}.
\end{align*}
By the previous assumption, these terms are stored in memory and the cost of computing such a term is therefore $\mathcal{O}(1)$.

 \item[Case Two] We must compute a term of the form 
    \begin{align*}
    a(u^{(k-m)}, y^{k}_i) \quad \text{for some }  m \in \{1,\ldots,\tilde{q}\}, ~\text{ and }~i \in \{1, \ldots, M\}.
\end{align*}
Making use of the basis expansion of $y^{k}_i$ given by \eqref{eq:sol-rep-1}, we see that
\begin{align*}
    a(u^{(k-m)}, y^{k}_i)
    = 
    \sum_{n=1}^{N_i} {\bf y}_n^{i}  a\big(u^{(k-m)}, \phi_n^{i}\big) + {\bf y}_{N_i+1}^{i} \tilde{\bf A}_{1m}
    =    
    a_i + {\bf y}_{N_i+1}^{i} \tilde{\bf A}_{1m}
\end{align*}
Both terms are precomputed, and the complexity is $\comp{1}$. 
\item[Case Three] We must compute a term of the form 
    \begin{align*}
        a(y^{k}_j, y^{k}_i)
        =& 
        a\big(y^{j},y^{i}\big) 
        + {\bf y}_{N_j+1}^{j} a_i 
        + {\bf y}_{N_i+1}^{i} a_j
        + {\bf y}_{N_j+1}^{j} {\bf y}_{N_i+1}^{i}  \tilde{\bf A}_{11}.
    \end{align*}
    Note that the cost of computing $a\big(y^{j},y^{i}\big)$ (and hence of computing the full term above) scales as $\comp{\Nloc}$ but that $a(y_j, y_i)=0$ if $V_i\cap V_j = \emptyset$.
\end{description}
All told, the computational cost of assembling the matrix $\Az $ scales as $\Comp{M\Nloc + q^2 + q M  + M\Nloc}$\} using (C5), which can be simplified to $\comp{M\Nloc}$ using (C6). 

Once the matrix $\Az$ has been assembled, the cost of computing the solution to linear systems involving $\Az$ scales as $\Comp{M^3}$. If instead of dense linear algebra, a Krylov-type solver is used, a matrix-vector product 
\begin{align*}
   \big(\Az \bold{v}\big)_{m} &=  
   \sum_{n=1}^{\tilde q}  \tilde{\bf A}_{mn}{\bf v}_n
   +
   \sum_{i=1}^{M}
   {\bf v}_{\tilde q+i} a\big(y^{k}_{i}, u^{(k-m)}\big) 
   && \text{for } m \in \{1, \ldots, \tilde q\},
   \\
   \big(\Az \bold{v}\big)_{\tilde q+ i}
   &= 
   \sum_{n=1}^{\tilde q} {\bf v}_n a\big(u^{(k-n)},  y^{k}_{i}\big)
   +
   \sum_{j=1}^{M}
   {\bf v}_{\tilde q+j} a\big(y^{k}_{j}, y^{k}_{i}\big) 
   && \text{for } i \in\{1,\ldots,M\},
\end{align*}
has to be computed. As before, we precompute ${a}_i:=a\big(u^{(k-1)},y^{i}\big)$ for each $i=1,\ldots,M$ in $\comp{M\Nloc}$. We also build the sums $c_{xy}=\sum_{j=1}^{M} {\bf v}_{\tilde q+j}{\bf y}_{N_j+1}^{j}$ and $a_x=\sum_{j=1}^{M} {\bf v}_{\tilde q+j}{a}_j$ in $\comp{M}$-scaling cost.
We can then write
\begin{align*}
    \sum_{j=1}^{M} {\bf v}_{\tilde q+j} a\big(y^{k}_{j}, y^{k}_{i}\big) 
    =&
    \sum_{\substack{j=1 \\ V_i \cap V_j \not = \emptyset}}^{M}
    {\bf v}_{\tilde q+j} 
    a\big(y^{j},y^{i}\big) 
    +  
    a_x {\bf y}_{N_i+1}^{i} 
    +
    c_{xy}
    a_i 
    + 
    c_{xy} {\bf y}_{N_i+1}^{i}  \tilde{\bf A}_{11},
\end{align*}
to deduce from (C5) that terms of this form can be computed in $\comp{\Nloc}$-scaling cost. Thus, we conclude that the total cost of a matrix-vector product involving $\Az$ is $\Comp{M\Nloc + q(q+\Nloc)+M(q+\Nloc)}$, and therefore, using (C6), as $\comp{M\Nloc}$.

\subsubsection{The matrices \texorpdfstring{$\Si$}{Si} and \texorpdfstring{$\bold{S}^0$}{S0}}
\label{sssec:mass-matrices}
The computational cost of constructing $\Si$ and $\bold{S}^0$ or applying them to vectors scales exactly as $\Ai$ and $\Az$ respectively.

\subsubsection{The nonlinear operators \texorpdfstring{$\Ni({\bf u})$}{Ni(u)}, \texorpdfstring{$\Nz({\bf u})$}{Nz(u)}, \texorpdfstring{$\Hi({\bf u})$}{Hi(u)} and \texorpdfstring{$\Hz({\bf u})$}{Hz(u)}}
It follows from (C1) and (C4) that the computational cost of computing the term 
\[
    \tilde {\bf N}_{\ell\ell'}({\bf u}) := \int_{\Omega}G'(u^2(\bold{x})) u^{(\ell)}(\bold{x}) u^{(\ell')} (\bold{x})\; d \bold{x}
\]
for $\ell,\ell'=k-\tilde q,\ldots, k-1$ scales as $\comp{N}$, thus altogether as $\comp{q^2 N}$.
Once $\tilde {\bf N}_{(k-1)(k-1)}({\bf u})$ has been computed, the complexity analysis of a matrix-vector multiplication ${\bf v} \mapsto \Ni({\bf u}) {\bf v}$ follows the same arguments as those for the matrix $\Ai$. Matrix-vector products involving $\Ni({\bf u})$ thus scale as $\mathcal O(N_i)$. The same considerations hold true for the assembly of and matrix-vector products with $\Nz({\bf u})$, the cost of which scales as $\comp{q^2 N}$ and $\comp{M\Nloc}$ respectively. Computations involving the matrices $\Hi({\bf u})$ and $\Hz({\bf u})$ (which may be required if one uses second-order optimization methods for the nonlinear problems) have the same cost as those for $\Ni({\bf u})$ and $\Nz({\bf u})$ respectively.

\section{Application to the model problems}
\label{sec:appl-mod-pb}

The purpose of this section is to expand on the preceding Sections \ref{sec:2.3} and \ref{sec:2.4} and explain in more detail the structure of the EMDD minimization problems and the computational cost of solving these problems in the case of the four classes of models \eqref{eq:linear_sys}-\eqref{eq:nonlinear_eig}. Throughout this section we assume the setting and notation of Sections \ref{sec:2.2}-\ref{sec:2.4}.

\subsection{Model~Problem~1: Linear Source Problems of the form \texorpdfstring{\eqref{eq:linear_sys}}{(3.1)}}

In this case, the solution $y_i^{(k)}$ to the $i^{\rm th}$ EMDD local minimization problem at iteration number $k$ can be obtained by solving the $(N_i+1)$-dimensional \emph{linear} matrix equation
\begin{align}
    \label{eq:linsys_loc_matrix}
    \Ai \yi = \fiv,
\end{align}
with $\Ai \in \R^{(N_i+1) \times (N_i+1)}$ and ${\bf{f}^{(i)}}\in \R^{N_i+1}$ defined according to Definition \ref{def:loc_matrix}.

\noindent 
Similarly, the solution to the EMDD second-level minimization problem at iteration number $k$ can  be obtained by solving the $(\tilde q+M)$-dimensional \emph{linear} matrix equation
\begin{align}
    \label{eq:linsys_2ndlevel_matrix}
    \Az {\bf{u}}^{(k)} = \fz,
\end{align}
with $\Az \in \R^{(\tilde q+M) \times(\tilde q+M)}$ and $\fz\in \R^{\tilde q+M}$ defined according to Definition \ref{def:global_matrix}. 

\bigskip

Algorithm~\ref{alg:source-problems} contains a pseudocode description of the EMDD method for this class of model problems. We deduce, in particular, that the cost of one iteration of the algorithm scales as $\comp{N+\Nkry M\Nloc}$, and we note that only one global $\comp{N}$ computation has to be performed per iteration at step (1).

\begin{algorithm}
\caption{EMDD for linear/nonlinear source problems}
\label{alg:source-problems}
\begin{algorithmic}
\STATE{\textbf{Initialization:}}\vspace{2pt}
\STATE{\textit{Required quantities:} Initial guess $u^{(0)}\in\Vapp$.}
\STATE{Compute all $f(\phi_m^{i})$ in $\comp{M\Nloc}$ and $\tilde{\bf A}_{00}=a(u^{(0)},u^{(0)})$ in $\comp{N}$.}
\medskip
\STATE{\textbf{Loop:}}\vspace{2pt}
\STATE{For each $k=1,2,\ldots$}
\medskip
\begin{enumerate}[label=(\arabic*)]
   \item[] \hspace*{-1.2\leftmargin} \textit{Required quantities:} $\tilde{\bf A}_{\ell\ell'}=a(u^{(\ell)},u^{(\ell')})$ for $\ell,\ell'\in \{k-\tilde q,\ldots,k-1\}$.
    \item complete building $\fiv$ by computing $f(u^{(k-1)})$ in $\comp{N}$,
    \item
    \begin{itemize}
    \item[\textit{(i)}]
    \textbf{linear problem:} solve the local linear problems~\eqref{eq:linsys_loc_matrix} for each $i=1,\ldots,M$ in $\comp{\Nkry M\Nloc}$$^*$  using a Krylov solver,
    \item[\textit{(ii)}] \textbf{nonlinear problem:} solve the local nonlinear problems~\eqref{eq:nonlin_sys_1stlevel} for each $i=1,\ldots,M$ with an iterative solver of cost $\comp{\Nnlin M N}$$^\dagger$,
    \end{itemize}
    \item assemble $\fz$ in $\comp{M\Nloc}$,
    \item 
    \begin{itemize}
    \item[\textit{(i)}] \textbf{linear problem:} solve the second-level optimization problem~\eqref{eq:linsys_2ndlevel_matrix} in $\comp{\Nkry M \Nloc }$$^*$  using a Krylov solver,
    \item[\textit{(ii)}] \textbf{nonlinear problem:} 
    solve the second-level optimization problem~\eqref{eq:nonlin_sys_2ndlevel} of cost {$\comp{\Nnlin M\Nloc + q^2 N \Nnlin}$}$^\dagger$ ,
    \end{itemize}
    \item 
    compute $\tilde{\bf A}_{k\ell}=a(u^{(k)},u^{(\ell)})$ for $\ell=k-\tilde q+1,\ldots,k$ and its symmetric counterpart in $\comp{qN}$. 
    In the linear case, one can reduce it to $\comp{qM}$ by using the relation $\tilde{\bf A}_{k\ell}=a(u^{(k)}, u^{(\ell)})=\big(\bold{f}^{(0)}\big)^{\intercal}\bold{u}^{(\ell)}$.
\end{enumerate}
\end{algorithmic}
$\hphantom{1}^*$$\Nkry$ is the maximum number of Krylov iterations needed to solve linear systems involving $\{\Ai\}_{i=0}^M$.
\\
$\hphantom{1}^\dagger$$\Nnlin$ is the maximum number of matrix-vector products involving $\Ai$, $\Ni({\bf u})$ or $\Hi({\bf u})$ (with varying ${\bf u}$) needed to solve Equation~\eqref{eq:nonlin_sys_1stlevel} or~\eqref{eq:nonlin_sys_2ndlevel}.
\end{algorithm}

\subsection{Example Class 2: Semilinear Source Problems of the form~\texorpdfstring{\eqref{eq:nonlinear_sys}}{(3.2)}}

In this case, the solution $y^{(k)}_i$ to the $i^{\rm th}$ EMDD local minimization problem at iteration number $k$ is obtained by solving the $N_i+1$-dimensional \emph{nonlinear} matrix equation 
\begin{align}\label{eq:nonlin_sys_1stlevel}
\Ai \yi + \Ni( \yi) \yi= \fiv,
\end{align}
with $\Ai, \Ni(\cdot) \in \R^{(N_i+1) \times (N_i+1)}$ and ${\bf{f}^{(i)}}\in \R^{N_i+1}$ defined according to Definition \ref{def:loc_matrix}. 
Similarly, the solution to the second-level EMDD minimization problem at iteration number $k$ is obtained by solving the $(\tilde q+M)$-dimensional \emph{nonlinear} matrix equation 
\begin{align}
    \label{eq:nonlin_sys_2ndlevel}
    \Az {\bf{u}}^{(k)} +\Nz( {\bf{u}}^{(k)}){\bf{u}}^{(k)} = \fz, 
\end{align}
with $\Az, \Nz(\cdot) \in \R^{(\tilde q+M) \times (\tilde q+M)}$ and $\fz\in \R^{(\tilde q+M)}$ defined according to Definition \ref{def:global_matrix}.  
\smallskip

Algorithm \ref{alg:source-problems} again provides a pseudocode description of the EMDD algorithm for this class of model problems. In this case, the overall complexity for one EMDD iteration scales as $\comp{\Nnlin (q^2+M) N}$ since each subdomain has to perform independent global $\comp{N}$ computations at step (2).

\begin{remark}[qEMDD: Reducing the complexity $\comp{MN}$]\label{rem:quadratic-approximation}
The computational bottleneck in Algorithm~\ref{alg:source-problems} is Step (2). Indeed, since $V_i^{(k)}=V_i+\operatorname{span}\{u^{(k-1)}\}$, the global function $u^{(k-1)}$ will appear with a \emph{different} weight in each local iterate in each local optimization solve. This means that the matrix $\Ni(\cdot)$ (which depends nonlinearly on the current iterate) must be reassembled at a cost of $\mathcal{O}(N)$ at each iteration of each local optimization solve, thus leading to a total scaling of  $\mathcal{O}(\Nnlin M N)$. Note that this is similar to existing DD schemes that tackle nonlinear PDEs using a DD-approach (see, e.g.~\cite{lions1988schwarz,dryja1997nonlinear,lui1999schwarz,chaouqui2022linear}). 
    
    A variant of the EMDD method, which we call quadratic EMDD (qEMDD), eliminates this bottleneck by replacing $\en(y_i)$ in (2.4) with its second-order Taylor approximation $\en_2(y_i;u^{(k-1)})$ at $u^{(k-1)}$. In the linear algebra formulation introduced above, the minimizer ${\bf{y}}^i\in \R^{N_i+1}$ of this quadratic energy functional over~$V_i^{(k)}$, can be written as
    ${\bf{y}}^i={\bf{u}}+{\bf{w}}^i$, where ${\bf{u}}=(0,\dots,0,1)^\top\in\R^{N_i+1}$ and ${\bf{w}}^i$ solves the equation
    \begin{equation}
        \label{eq:Newton-loc}
        \Big(\Ai + \Hi({\bf{u}})\Big){\bf{w}}^i
        = -\Big( \big(\Ai + \Ni({\bf{u}})\big){\bf{u}} - {\bf{f}}^i\Big).
    \end{equation}
    Equation~\eqref{eq:Newton-loc} can be seen as a single step of Newton's method applied to the local problem  \eqref{eq:EMDD-step1}, with the same initial guess $u^{(k-1)}$ for every $i$. The matrices $\Hi(\cdot)$ and $\Ni(\cdot)$ are thus evaluated only once at ${\bf{u}}=(0,\dots,0,1)^\top\in\R^{N_i+1}$ with cost $\mathcal{O}(N)$. It follows that the cost of Step (2) reduces to $\mathcal{O}(N + \Nkry M \Nloc)$ with $\Nkry$ being the number of Krylov iterations required to solve~\eqref{eq:Newton-loc}. The overall complexity of one qEMDD iteration then scales as $\mathcal{O}(\Nkry M \Nloc + q^2N\Nnlin)$. Note that such a quadratic approximation may increase the number of outer EMDD iterations, though we do not observe this in the numerical tests of Section~\ref{sec:numerics}.
\end{remark}

\subsection{Example Class 3: Linear Eigenvalue Problems of the form~\texorpdfstring{\eqref{eq:linear_eig}}{(3.3)}}
\noindent   
In this case, the solution $y_i^{(k)}$ to the $i^{\rm th}$ EMDD local minimization problem at iteration number $k$ can be obtained by solving (for the lowest eigenpair) the $(N_i+1)$-dimensional \emph{generalized, linear} matrix eigenvalue problem
\begin{align}\label{eq:lineig_loc_matrix}
\Ai \yi = \lambda^{(i)} \Si\yi,
\end{align}
with $\Ai, \Si \in \R^{(N_i+1) \times (N_i+1)}$ and ${\bf{f}^{(i)}}\in \R^{N_i+1}$ defined according to Definition \ref{def:loc_matrix}.

\noindent Similarly, the solution to the second-level EMDD minimization problem at iteration number $k$ can be obtained by solving (for the lowest eigenpair) the $N^{(0)}_k$-dimensional \emph{generalized, linear} matrix equation
\begin{align}\label{eq:lineig_2ndlevel_matrix}
\Az {\bf{u}}^{(k)} = \lambda^{(k)}\Sz{\bf{u}}^{(k)},
\end{align}
with $\Az, \Sz \in \R^{(\tilde q+M) \times (\tilde q+M)}$ and $\fz\in \R^{\tilde q+M}$ defined according to Definition \ref{def:global_matrix}.

\vspace{2mm}
Let us remark here that--thanks to the min-max principle--if the lowest eigenvalue of the global eigenvalue problem \eqref{eq:linear_eig} is non-degenerate, and if the initialization $u^{(0)}$ is sufficiently accurate, then  both eigenvalue problems \eqref{eq:lineig_loc_matrix} and \eqref{eq:lineig_2ndlevel_matrix} will possess spectral gaps at their respective lowest eigenvalues.

\medskip
Algorithm~\ref{alg:evp-problems} contains a pseudocode description of the EMMD method for eigenvalue problems and implies, in particular, that the overall complexity for one iteration scales as $\comp{N + \Nevp M \Nloc}$.

\begin{algorithm}
\caption{EMDD for linear/nonlinear eigenvalue problems}
\label{alg:evp-problems}
\begin{algorithmic}
\STATE{\textbf{Initialization:}}\vspace{2pt}
\STATE{\textit{Required quantities:} Initial guess $u^{(0)}\in\Vapp$.}
\STATE{Compute $\tilde{\bf A}_{00}=a(u^{(0)},u^{(0)})$ and $\tilde{\bf S}_{00}=(u^{(0)},u^{(0)})_{L^2(\Omega)}$ in $\comp{N}$.}
\medskip
\STATE{\textbf{Loop:}}\vspace{2pt}
\STATE{For each $k=1,2,\ldots$}
\medskip
\begin{enumerate}[label=(\arabic*)]
    \item[] \hspace*{-1.2\leftmargin} 
    \textit{Required quantities:} 
    $\tilde{\bf A}_{\ell\ell'}=a(u^{(\ell)},u^{(\ell')})$ and $\tilde{\bf S}_{\ell\ell'}=(u^{(\ell)},u^{(\ell')})_{L^2(\Omega)}$ for $\ell,\ell'\in \{k-\tilde q,\ldots,k-1\}$.
    \item 
    \begin{itemize}
    \item[\textit{(i)}]
    \textbf{linear problem:} 
    solve the generalized eigenvalue problem~\eqref{eq:lineig_loc_matrix} for each $i=1,\ldots,M$ in $\comp{\Nevp M\Nloc}$$^*$ using an iterative eigenvalue solver,
    \item[\textit{(ii)}] 
    \textbf{nonlinear problem:} 
    solve the nonlinear generalized eigenvalue problem~\eqref{eq:nonlin_matrix} for each $i=1,\ldots,M$ in $\comp{\Nscf \Nevp M\Nloc + \Nscf MN }$$^\dagger$ using an SCF iterative scheme,
    \end{itemize}
    \item 
    \begin{itemize}
    \item[\textit{(i)}] \textbf{linear problem:}
    solve the second-level optimization problem~\eqref{eq:lineig_2ndlevel_matrix} in $\comp{\Nevp M \Nloc}$$^*$ using an iterative eigenvalue solver, 
    \item[\textit{(ii)}] \textbf{nonlinear problem:} 
    solve the second-level optimization problem~\eqref{eq:nonlin_matrix_2} in {$\comp{\Nscf\Nevp M\Nloc + q^2\Nscf N}$}$^\dagger$ using an SCF-iterative scheme.
    \end{itemize}
    \item compute $\tilde{\bf A}_{k\ell}=a(u^{(k)},u^{(\ell)})$ and $\tilde{\bf S}_{k\ell}=(u^{(k)},u^{(\ell)})_{L^2(\Omega)}$ for $\ell=k-\tilde q+1,\ldots,k$ and its symmetric counterpart in $\comp{qN}$. In the linear case, one can use the relation $\tilde{\bf A}_{k\ell}=\lambda^{(k)} \tilde{\bf S}_{k\ell}$.
\end{enumerate}
\end{algorithmic}
 $\hphantom{1}^*$$\Nevp$ is the maximum number of matrix-vector products involving $\{\Ai\}_{i=0}^M$, $\{\Si\}_{i=0}^M$ needed in the iterative method to solve the eigenvalue problem~\eqref{eq:lineig_loc_matrix} or~\eqref{eq:lineig_2ndlevel_matrix}.

$\hphantom{1}^\dagger$$\Nscf$ is the maximum number of outer SCF iterations required to solve the local nonlinear eigenvalue problems~\eqref{eq:nonlin_matrix} or~\eqref{eq:nonlin_matrix_2} while $\Nevp$ is the maximum number of matrix-vector products involving $\{\Ai\}_{i=0}^M$, $\{\Si\}_{i=0}^M$ used by the inner (generalized, linear) eigenvalue solver.
\end{algorithm}

\subsection{Example Class 4: Nonlinear Eigenvalue Problems of the form~\texorpdfstring{\eqref{eq:nonlinear_eig}}{(3.4)}} 
In this case, the solution $y_i^{(k)}$ to the $i^{\rm th}$ EMDD local minimization problem at iteration number $k$ can thus be obtained by solving the $(N_i+1)$-dimensional \emph{constrained} minimization problem
\begin{align}\label{eq:nonlin_energy_matrix}
    \yi
    :=& 
    \argmin_{{\bf y} \in \R^{N_i+1}}\hspace{1mm}  {
    \tfrac12 \bf y}^{\intercal}\Ai {\bf y} 
    +  \Gi({\bf{y}}) 
    \qquad \text{subject to} \quad 
    \big(\yi\big)^{\intercal} \Si\yi=1,
\end{align}
with $\Ai, \Si  \in \R^{(N_i+1) \times (N_i+1)}$ defined according to Definition \ref{def:loc_matrix} and where, $\forall y\in \Vik$ represented by the vector ${\bf y}\in \R^{N_i+1}$, $\Gi({\bf y})$ whose gradient is $\Ni({\bf y}){\bf y}$ is given by 
\[
    \Gi({\bf y})
    =
    \frac{1}{2}
     \int_{\Omega}G_{\rm eig}(y^2(\bold{x})) \; d \bold{x}.
\]

Note that the normalization constraint appearing in Equation \eqref{eq:nonlin_energy_matrix} defines a smooth embedded submanifold of $\R^{N_i+1}$, which makes Equation \eqref{eq:nonlin_energy_matrix} amenable to numerical resolution using Riemannian optimization algorithms. Alternatively, the Euler-Lagrange equations associated with the minimization problem \eqref{eq:nonlin_energy_matrix} consist of the following $(N_i+1)$-dimensional \emph{generalized, nonlinear} matrix eigenvalue problem 
\begin{align}
    \label{eq:nonlin_matrix}
    \Ai \yi + \Ni( \yi) \yi =  \lambda^{i} \Si\yi. 
\end{align}
with $\Ni(\cdot) \in \R^{(N_i+1) \times (N_i+1)}$ defined according to Definition \ref{def:loc_matrix}. Equation~\eqref{eq:nonlin_matrix} can be solved very efficiently using the so-called self-consistent field method (SCF), which consists of freezing the nonlinearity at the current eigenvector iterate, solving the resulting linear eigenvalue problem, updating the nonlinearity and repeating until convergence (see, e.g.,~\cite{cances2021convergence}). This process can be accelerated using various techniques related to Anderson acceleration \cite{anderson1965iterative}.

The solution to the second-level EMDD minimization problem at iteration number~$k$  can now be obtained by solving the following $(\tilde q +M)$-dimensional \emph{constrained} minimization problem:
\begin{align}\label{eq:nonlin_energy_matrix_2}
    {\bf{u}}^{(k)}
    :=& \argmin_{{\bf u} \in \R^{N^{(0)}_k}}\hspace{1mm}  \tfrac12{\bf u}^{\intercal}\Az {\bf u} +  \Gz({\bf{u}}) 
    \qquad \text{subject to} \quad 
    \big({\bf{u}}^{(k)}\big)^{\intercal} \Sz{\bf{u}}^{(k)}=1,
\end{align}
with $\Az, \Sz  \in \R^{(\tilde q +M) \times (\tilde q +M)}$~defined according to Definition \ref{def:global_matrix} and where, $\forall y\in \Vzk$ represented by the vector ${\bf y}\in \R^{\tilde q +M}$, $\Gz({\bf y})$ whose gradient is $\Nz({\bf y}){\bf y}$ is given by 
\[
    \Gz({\bf y})
    =
    \frac{1}{2}
     \int_{\Omega}G_{\rm eig}(y^2(\bold{x})) \; d \bold{x}.
\]

The Euler-Lagrange equations associated with the constrained minimization problem \eqref{eq:nonlin_energy_matrix_2} consist of the following $(\tilde q +M)$-dimensional \emph{generalized, nonlinear} matrix eigenvalue problem 
\begin{align}
    \label{eq:nonlin_matrix_2}
    \Az {\bf{u}}^{(k)} + \Nz( {\bf{u}}^{(k)}) {\bf{u}}^{(k)} =  \lambda^{(k)} \Sz{\bf{u}}^{(k)},
\end{align}
with $\Nz(\cdot) \in \R^{(\tilde q +M) \times (\tilde q +M)}$ defined according to Definition \ref{def:global_matrix}. As before, one can either solve the constrained minimization problem \eqref{eq:nonlin_energy_matrix_2} using Riemannian optimization methods, or the nonlinear eigenvalue problem \eqref{eq:nonlin_energy_matrix_2} using SCF-type methods. Let us note here that while both constrained minimization problems \eqref{eq:nonlin_energy_matrix} and \eqref{eq:nonlin_energy_matrix_2} admit minimizers, it is not clear these minimizers are unique (even accounting for sign). It is also not clear (unless further assumptions are imposed) that the minimizers correspond to the \emph{ground state} (i.e., lowest) eigenvalue of the nonlinear eigenvalue problems which is a known particularity for nonlinear eigenvalue problems. 

Algorithm~\ref{alg:evp-problems} contains a pseudocode description of the EMDD algorithm for nonlinear eigenvalue problems. The overall complexity of the algorithm per iteration scales as $\comp{\Nscf\Nevp M\Nloc + q^2\Nscf N}$. Here the first term denotes the cost of performing matrix vector products within each SCF outer loop while the second term denotes the cost of updating the nonlinearity (once per outer SCF iteration).

\begin{remark}[qEMDD: Reducing the complexity $\comp{MN}$]
    \label{rem:quadratic-approximation-nevp}
        As was the case for the nonlinear source problem \eqref{eq:nonlinear_sys} (see Remark~\ref{rem:quadratic-approximation}), we can reduce the computational cost of Step (1) of Algorithm~\ref{alg:evp-problems} for nonlinear eigenvalue problems by introducing a quadratic approximation of the energy functional.
    Indeed, denoting by $\mathcal R(v)=v/\|v\|_{L^2(\Omega)}$, the retraction on the unit sphere, and expanding  $w \mapsto \en(\mathcal R(u^{(k-1)}+w))$ around $w=0$, defines the second order Taylor approximation of $\en\circ\mathcal R$ around $u^{(k-1)}$.
    Minimizing the resulting quadratic energy functional over $w^i\in V_i^{(k)}\cap\mathcal T_{u^{(k-1)}}\mathcal M$ then amounts to solving
    \begin{equation*}
        \Pu^\top \Big(\Ai + \Hi({\bf{u}}) - \lambda \Si \Big) \Pu\, {\bf{w}}^i
        = -\Big( \Ai + \Ni({\bf{u}}) - \lambda \Si\Big){\bf{u}},
    \end{equation*}
    where ${\bf{u}}=(0,\dots,0,1)^\top\in\R^{N_i+1}$ collects the coordinates of $u^{(k-1)}$ in the basis of $V_i^{(k)}=V_i+\operatorname{span}\{u^{(k-1)}\}$, so that ${\bf{u}}^\top\Si{\bf{u}}=1$ and $\lambda={\bf{u}}^\top(\Ai+\Ni({\bf{u}})){\bf{u}}$, and where $\Pu = {\bf{I}} - {\bf{u}} {\bf{u}}^\top \Si$ is the projection onto the tangent space $\{ {\bf{z}}\in \R^{N_i+1} \;|\; {\bf{z}}^\top \Si {\bf{u}}=0 \}$; the matrix inversion is to be understood in the tangent space only.
    The corrections $w^i$ then replace $y^{(k)}_i$ in Step (2) of~\eqref{eq:EMDD-step1}.
\end{remark}

\section{Numerical Experiments} \label{sec:numerics}
We present numerical experiments for the EMDD method applied to the four example classes introduced in Section \ref{sec:2.2}. The experiments are performed using a serial implementation of the EMDD method using the \texttt{Gridap} finite element framework~\cite{badiaGridapExtensibleFinite2020} in the Julia language~\cite{bezansonJuliaFreshApproach2017} and the code is archived in~\cite{codeZenodo2026}. In the following, we always denote the maximal subdomain diameter by $H$, the number of overlap layers, i.e.\ the amount of element added to each subdomain, by $\ell$, the maximal physical overlap size by $\delta$, and the mesh size by $h$. If not otherwise stated, we use an irregular METIS~\cite{karypisMETISSoftwarePackage1997} partition to construct the subdomain decomposition.

\subsection{Linear Source Problems}\label{sec:numerics_linear_sys}
We first consider linear source problems
\begin{align*}
    -\nabla \cdot (\alpha(x,y)\nabla u) = f \quad \text{in } \Omega, \qquad u = 0 \quad \text{on } \partial \Omega,
\end{align*}
where $\Omega = (0,1)^2$ is the unit square and $f$ is a forcing term. Two different problems are considered: the Poisson problem with $\alpha = 1, f = 1$ and a diffusion problem with a spatially varying coefficient $\alpha(x,y) = 2 + \sin(2\pi x + 3\pi y)$ together with a forcing term $f(x,y)$, that is manufactured to yield the asymmetric and sign-changing solution $u(x,y) = e^{3 x y}\sin(\pi x)\sin(2\pi y)$.
\par
We discretize the problem using a first-order conforming finite-elements on a uniform quadrilateral mesh with $1/h = 64$. The resulting linear system is solved using the EMDD method with $q=1$ and $q=2$, as well as with the restricted additive Schwarz (RAS) iteration, the conjugate gradient method with symmetric additive-Schwarz preconditioning (CG+AS), and right-preconditioned GMRES with the non-symmetric RAS preconditioning (GMRES+RAS). All methods start from the all-ones coefficient vector, \(\uz = \boldsymbol{1}\). With the residual defined as \(\resk \coloneq \bA\uk - \bfb\), the convergence of the methods is measured relatively to the initial residual norm, i.e., \(\|\resk\|_2 / \|\resz\|_2\). 
Here, $\bA$ denotes the matrix representation of the linear differential operator in the global finite element basis.
\par
In Fig.~\ref{fig:poisson_cmp}, we present the convergence of the methods for two overlap layers (\(\ell=2\)) and $m \in \{4,16,64\}$ subdomains for both problem types and plot the relative residual norm for each outer iteration, i.e., a complete local subdomain solution procedure for the EMDD method. We observe that EMDD with \(q=2\) is consistently much faster than \(q=1\). All methods require more iterations to reach a fixed accuracy as \(m\) grows, as expected without a coarse correction while the history vector substantially reduces this deterioration but does not remove it. For the nontrivial diffusion problem, we observe an increase of iterations for the EMDD method, whereas the other methods do not behave different compared to the standard Poisson problem.
\begin{figure}[t]%
  \includegraphics[width=1.0\textwidth]{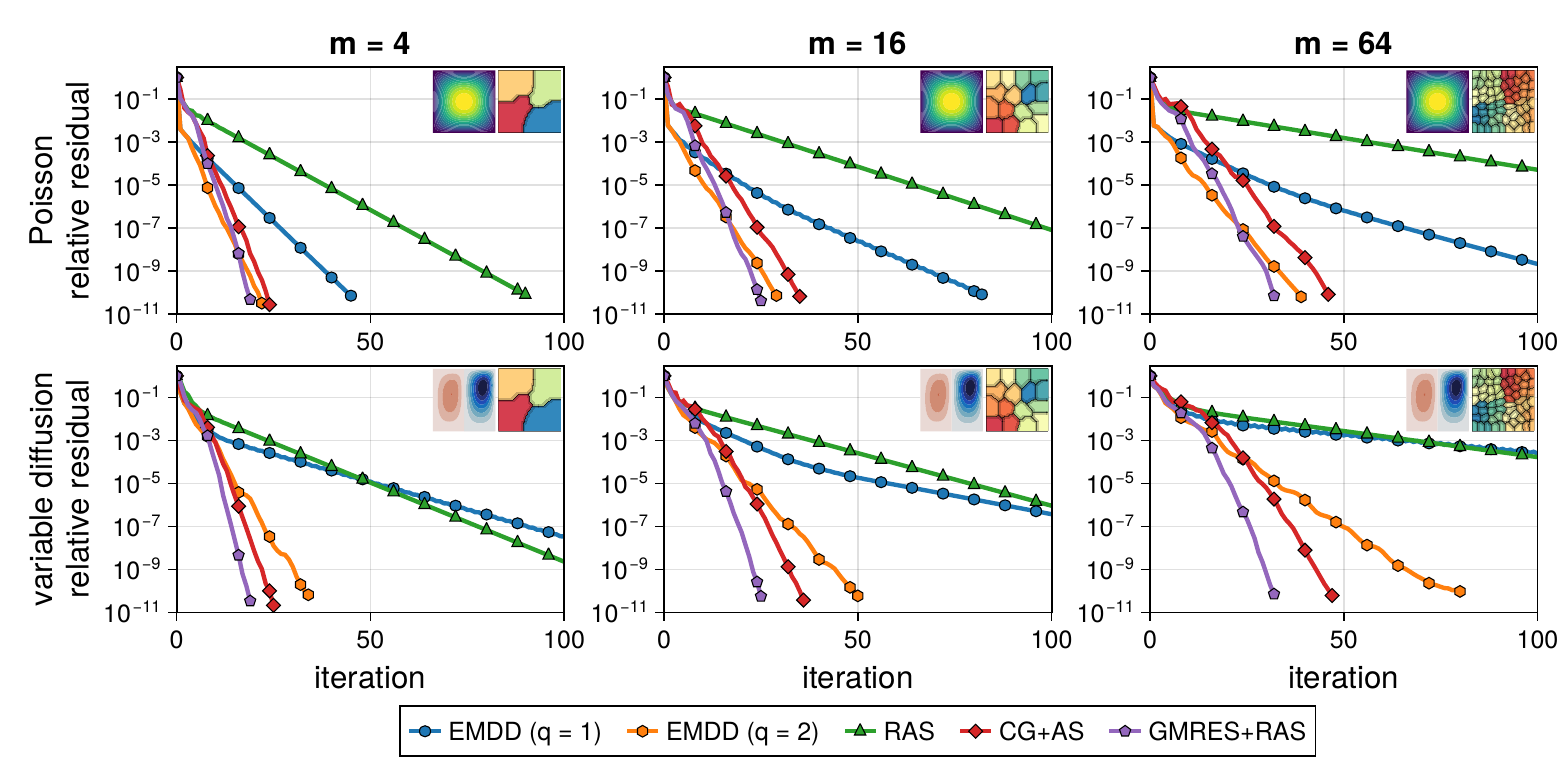}%
  \caption{Convergence of EMDD ($q=1$ and $q=2$), stationary RAS, CG+AS, and GMRES+RAS with $1/h=64$, two overlap layers, and $m=4,16,64$ subdomains for the Poisson problem (first row) and the variable diffusion problem (second row). The insets show the corresponding element partitions, in which darker interface regions indicate the overlap multiplicity.}\label{fig:poisson_cmp}%
\end{figure}%
\providecommand{\varddtablepath}{./tables}%
\providecommand{\varddtablepath}{./examples/paper/tables}
\begin{table*}[t]
  \footnotesize
  \centering
  \caption{%
    Parallel local-solve iterations required to reduce the relative residual below \(10^{-10}\) for the Poisson problem. The first two column groups refine the mesh for \(m=4\) while keeping the overlap layers fixed (first group) and keeping the physical overlap size fixed (second group). The third varies the overlap for \(1/h=64\) and \(m=4\).
  }\label{tab:poisson-scaling}
  \resizebox{\textwidth}{!}{%
  \pgfplotstabletypeset[
    col sep=comma,
    columns={method,fixed_layers_h20,fixed_layers_h40,fixed_layers_h60,
      fixed_layers_h80,fixed_layers_h100,fixed_layers_h120,
      fixed_ratio_h20,fixed_ratio_h40,fixed_ratio_h60,fixed_ratio_h80,
      fixed_ratio_h100,fixed_ratio_h120,overlap1,overlap2,overlap4,overlap8},
    every head row/.style={
      output empty row,
      before row={
        \toprule
        & \multicolumn{6}{c}{\makebox[0pt]{discretization \(1/h\) (\(\ell=2\))}}
        & \multicolumn{6}{c}{\makebox[0pt]{discretization \(1/h\) (\(\delta/H\approx0.1\))}}
        & \multicolumn{4}{c}{\makebox[0pt]{layers \(\ell\) (\(1/h=64\))}} \\
        \cmidrule(lr){2-7} \cmidrule(lr){8-13} \cmidrule(lr){14-17}
        method & 20 & 40 & 60 & 80 & 100 & 120
        & 20 & 40 & 60 & 80 & 100 & 120
        & 1 & 2 & 4 & 8 \\
      },
      after row=\midrule
    },
    every last row/.style={after row=\bottomrule},
    every column/.style={string type,column type={r}},
    columns/method/.style={string type,column type={l@{\hspace{1em}}}},
  ]{\varddtablepath/fig18_poisson_scaling.csv}%
  }
\end{table*}

\par
The sensitivity of the EMDD method with respect to the mesh and overlap properties is further presented in Table~\ref{tab:poisson-scaling} for a Cartesian partition. The first mesh sequence keeps two overlap layers fixed for an increasing mesh resolution. As expected, iteration counts generally grow as \(h\) decreases since the physical overlap size shrinks. The second sequence, on the other hand, increases the layer count with refinement so that \(\delta/H\), the ratio of the physical overlap size ($\approx \ell h$) to the subdomain size $H$, remains approximately \(0.1\). In this case, the iteration counts nearly level off, illustrating that preserving physical overlap recovers much better mesh-independent behavior. The final block fixes \(1/h=64\) and varies the overlap from one to eight layers. Increasing overlap generally lowers the iteration count. These results match with the classical theory of one-level domain decomposition methods~\cite{toselliDomainDecompositionMethods2005}. We additionally observe that increasing the EMDD history size above $q=2$ does not drastically improve the convergence, a behavior that we find for all test cases considered in this work.

\subsection{Semilinear Source Problems}\label{sec:numerics_semilinear_sys}
We continue with a strictly convex energy \(\mathcal{E}(u)=\frac12\int_\Omega |\nabla u|^2 +\frac{\beta}{4}\int_\Omega u^4-\int_\Omega fu\) on the unit square $\Omega=(0,1)^2$ for some $\beta>0$. The corresponding Euler-Lagrange equation reads
\begin{equation}\label{eq:numerics_semilinear_poisson}
  -\Delta u + \beta u^3 = f \quad \text{in } \Omega,
  \qquad u=0 \quad \text{on } \partial\Omega.
\end{equation}%
The forcing term is manufactured from
\begin{equation}\label{eq:numerics_semilinear_exact}
  u(x,y)
  =1.5\sin(\pi x)\sin(\pi y)
  +0.55\sin(2\pi x)\sin(3\pi y)
  +0.35\sin(3\pi x)\sin(2\pi y),
\end{equation}
and hence $f=-\Delta u+\beta u^3$. This choice gives a non-trivial, asymmetric and smooth solution while allowing the nonlinearity to be varied without changing the exact solution. We use conforming triangular $P_1$ finite elements on a uniform mesh with $1/h=16$ and partitions with $m\in \{2,4,8\}$ subdomains, and two overlap layers. All methods start from the zero function, which disables the nonlinearity in the first step. Let $\resk=\nabla \mathcal{E}_h(\uk)$ denote the assembled discrete Euler--Lagrange residual, we then report $\|\resk\|_2/\|\resz\|_2$ and stop when this quantity is below $10^{-7}$.
\begin{figure}[t]%
  \includegraphics[width=1.0\textwidth]{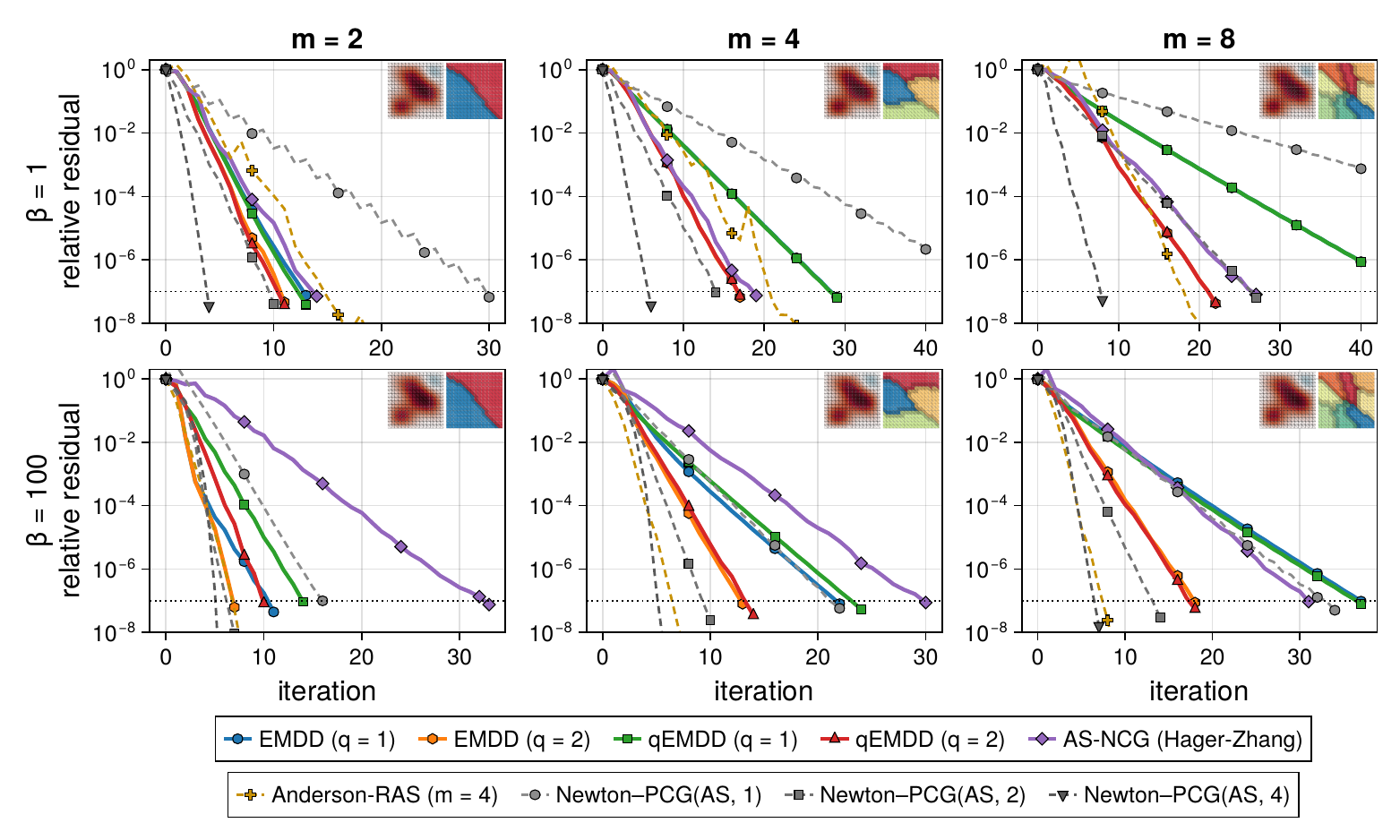}%
  \caption{%
  Convergence for the semilinear problem
  \eqref{eq:numerics_semilinear_poisson} with $\beta=1$ (top) and $\beta=100$
  (bottom), $1/h=16$, two overlap layers, and $m=2,4,8$ subdomains.%
  }\label{fig:semilinear_cmp}%
\end{figure}%
\par
Fig.~\ref{fig:semilinear_cmp} compares EMDD with $q=1$ and $q=2$ for $\beta=1$ and $\beta=100$. We additionally consider the quadratic-model EMDD (qEMDD) of Remark~\ref{rem:quadratic-approximation}, again with $q=1,2$. At the beginning of each qEMDD iteration, the energy is replaced by a quadratic Taylor approximation, which is then optimized following Remark~\ref{rem:quadratic-approximation}. The second-level recombination still minimizes the full nonlinear energy. The main comparison method is an additive-Schwarz preconditioned nonlinear conjugate gradient method with the Hager--Zhang update from the \texttt{Optim.jl}\footnote{\url{https://github.com/JuliaNLSolvers/Optim.jl}} package. We also report an Anderson-accelerated nonlinear RAS with history depth of four from the \texttt{NonlinearSolve.jl}\footnote{\url{https://github.com/SciML/NonlinearSolve.jl}} package and inexact Newton--PCG with one-level additive-Schwarz preconditioning and $\nu=1,2,4$ inner PCG steps.
\par
As before, the EMDD method with \(q=2\) is consistently faster than \(q=1\). Interestingly, the quadratic local model demonstrates the same effective convergence behavior as its exact nonlinear variant but at a fraction of the cost. 
With regard to the Newton--PCG method, we observe the usual second-order convergence behavior -- provided that a sufficient number of inner PCG iterations are carried out. Note however, that if the inverse Hessian is not computed to sufficient accuracy (e.g., if only one PCG iteration is performed), then the convergence of the Newton method is very slow. The Anderson-accelerated nonlinear RAS method works for the easier case of $\beta=100$ but has monotonicity difficulties for $\beta=1$.
Note that a careful evaluation of the efficiency considers not only the number of outer iterations but can be measured by the number a DD preconditioner is applied, making the (quasi-)Newton methods less efficient than they appear in Fig.~\ref{fig:semilinear_cmp}.

As an example of a well-posed semi-linear problem that does not satisfy the sufficient conditions \eqref{eq:irregular},
we consider the diffusion--reaction benchmark of~\cite[Sec.~6.1]{spicher2026iterative}:%
\begin{equation}
    \label{eq:numerics_semilinear_lshape}
    -\Delta u-12\exp(-u^2)=g \quad\text{in }\Omega,
  \qquad u=0\quad\text{on }\partial\Omega,
\end{equation}
in the L-shaped domain $\Omega=(-1,1)^2\setminus([-1,0]\times[0,1])$. The forcing is chosen so~that
\begin{equation}
  u(x,y)=2xy(1-x^2)(1-y^2)(x^2+y^2)^{-2/3}
\end{equation} 
is the exact solution. We employ a graded triangular $P_1$ mesh, two overlap layers, and the same values $m\in\{2,4,8\}$. The initial iterate, residual criterion, and comparison methods are the same as in the previous example. 
Fig.~\ref{fig:semilinear_lshape} confirms the benefit of the history vector for this problem with irregular solution. We also note that the more efficient quadratic-model variant is again almost equivalent to the exact variant.
\begin{figure}[t]%
  \includegraphics[width=1.0\textwidth]{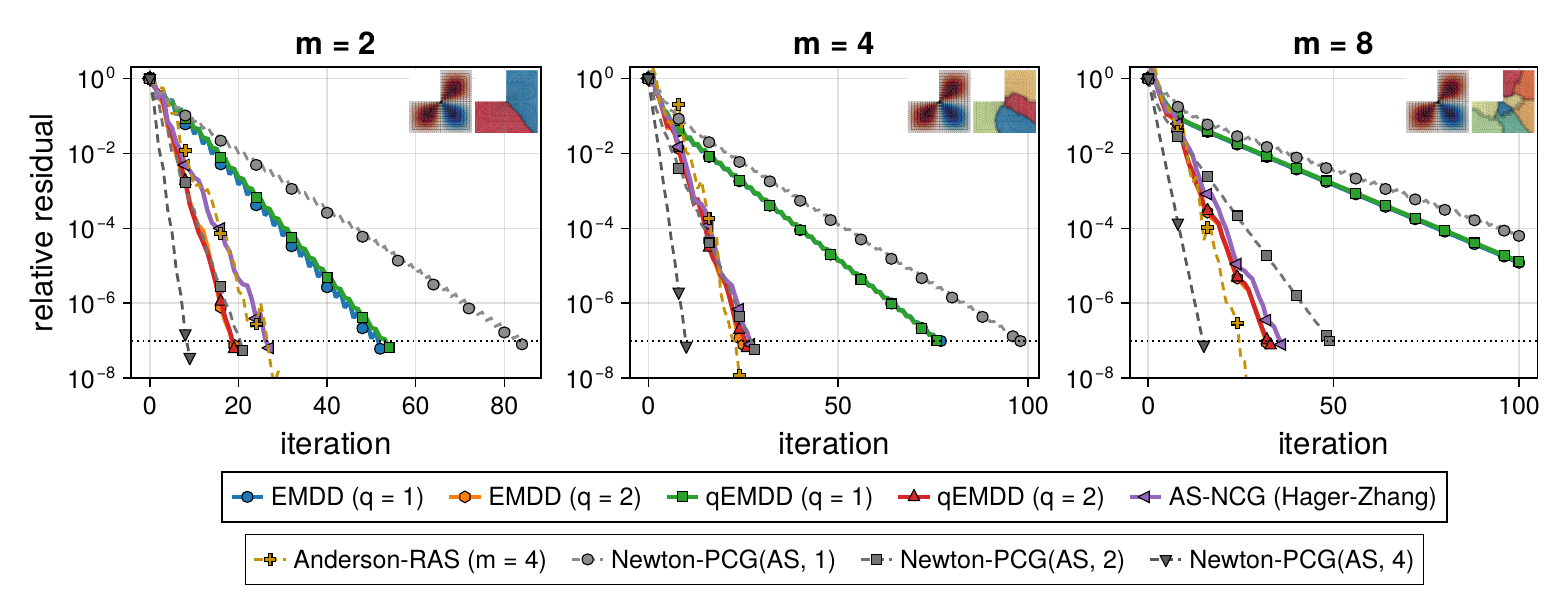}%
  \caption{%
    Convergence for the semilinear problem \eqref{eq:numerics_semilinear_lshape}  on the graded L-shaped domain, with $N=16$, grading exponent $0.4$, two overlap layers, and $m=2,4,8$ subdomains.
  }\label{fig:semilinear_lshape}%
\end{figure}%

\subsection{Linear Eigenvalue Problems}

We next consider the computation of the smallest eigenpair of the linear Schr\"odinger-type eigenvalue problem
\begin{equation}\label{eq:evp_experiment}
  -\Delta u + V(x,y)\,u=\lambda u
  \quad\text{in }\Omega,
  \qquad u=0\quad\text{on }\partial\Omega,
\end{equation}
on the unit square $\Omega=(0,1)^2$, with the off-centered exponential potential \(V(x,y)=\exp (5\sqrt{2(x-0.25)^2+(y-0.70)^2})\). In this case, we employ conforming bilinear $Q_1$ finite elements on a uniform quadrilateral mesh with $1/h=64$ and two overlap layers with $m\in \{4,16,64\}$ subdomains. Fig.~\ref{fig:evp_cmp} reports the convergence behavior in comparison with the classical gradient-based locally optimal preconditioned steepest descent (LOPSD+AS) and locally optimal block preconditioned conjugate gradient (LOBPCG+AS) methods (both with one-level additive Schwarz preconditioners).
For the sake of completeness, we also consider the Jacobi--Davidson method, see, e.g.,~\cite{sleijpen1996jacobi}, with a Schwarz-preconditioned GMRES solver, where $\nu=1,2,4$ inner GMRES iterations are performed per outer step, denoted by JD-GMRES(AS,~$\nu$). All methods start from the same $L^2$-normalized constant initial vector and, with $\resk\coloneq\bf{K}\uk-\lambda_k\bS\uk$ the assembled discrete residual, where $\lambda_k=(\uk)^\top\bA\uk$ is Rayleigh quotient for the $L^2$-normalized iterate $\uk$, we report $\Vert\resk\Vert_2/\Vert\resz\Vert_2$ and stop when this quantity is lower $10^{-6}$. In Fig.~\ref{fig:evp_cmp}, we see that the EMDD method with \(q=1\) is similar to LOPSD+AS, while the EMDD with \(q=2\) is comparable to LOBPCG+AS, which matches the same dimensions of the local subspaces to choose the next iterate from. As expected, the JD-GMRES(AS,~$\nu$) methods, which involve an approximate linear system inversion and are therefore considerably more expensive per outer iteration, outperform the other methods for $\nu=2,4$.
\begin{figure}[t]%
  \includegraphics[width=1.0\textwidth]{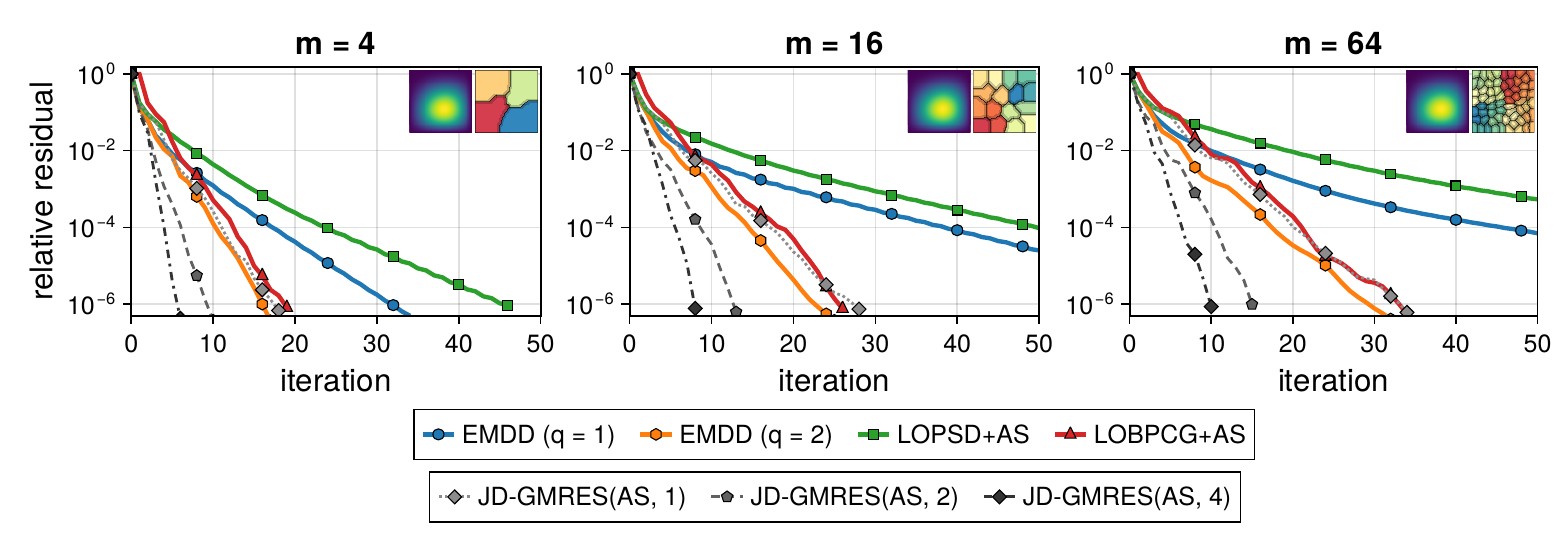}%
  \caption{%
    Convergence for the linear eigenvalue problem \eqref{eq:evp_experiment} obtained with EMDD ($q=1$ and $q=2$), LOPSD+AS, LOBPCG+AS, and JD-GMRES(AS,~$\nu$) with $\nu=1,2,4$ inner GMRES iterations.%
  }%
  \label{fig:evp_cmp}%
\end{figure}%

\subsection{Nonlinear Eigenvalue Problems}
As an example of a nonlinear eigenvalue problem, we compute the ground state of the Gross--Pitaevskii eigenvalue problem
\begin{equation}\label{eq:gp_experiment}
  -\Delta u + Vu + \beta u^3 = \lambda u \quad\text{in }\Omega,
  \qquad u = 0 \quad\text{on }\partial\Omega,
  \qquad \Vert u\Vert_{L^2(\Omega)} = 1,
\end{equation}
i.e., $a(v,w)=\int_\Omega (\nabla v\cdot\nabla w + Vvw) d \bold{x}$ and $G_{\rm eig}(t)=\tfrac{\beta}{2}t^2$ in the setting of Section \ref{sec:2.2} where \(\beta > 0\). We consider the benchmark numerical test given in \cite[Sec.~2.3]{henningGrossPitaevskiiEquation2025} which sets $\Omega=(-8,8)^2$, interaction strength $\beta=500$, and the harmonic-plus-optical trapping potential $V(x,y)=\tfrac12(x^2+4y^2)+10(\sin^2(\pi x)+\sin^2(\pi y))$. We again use conforming \(Q_1\) finite elements on a uniform $32\times32$ mesh and partitions with $m\in\{2,4,8\}$ and two overlap layers, and the prescribed $L^2$-normalized initial state $u^{(0)}(x,y)=c\,(x^2-8^2)(y^2-8^2)$ of~\cite{henningGrossPitaevskiiEquation2025}. Convergence is measured in the Euler--Lagrange residual $\resk\coloneq\big(\bA+\bN(\uk)-\lambda^{(k)}\bS\big)\uk$, where $\lambda^{(k)}$ is the current approximation of the nonlinear eigenvalue, and the tolerance is set to $10^{-6}$.
For EMDD, all resulting nonlinear eigenproblems are solved by a damped self-consistent-field iteration in $\Si$-orthonormal coordinates.
\par
In Fig.~\ref{fig:gp_cmp}, we compare EMDD with $q=1,2$ against normalized-gradient-flow methods on the same overlapping partitions. The iterate-dependent energy inner product $a_u(v,w)=\int_\Omega (\nabla v\cdot\nabla w+(V+\beta u^2)vw)\,\mathrm d\boldsymbol{x}$ yields the energy-adaptive normalized $a_u$-Sobolev gradient flow GFDN($a_u$) of~\cite{henningGrossPitaevskiiEquation2025}. We use the energy-minimizing normalized line search proposed in \cite{henningGrossPitaevskiiEquation2025} for the step size $\tau_n$ and also test the corresponding CG variant from~\cite{henningGrossPitaevskiiEquation2025}, which modifies the preceding direction with a Fletcher--Reeves-type coefficient. In principle, an exact Riemannian-gradient evaluation would require one solve with $\bA+\bN(\uk)$ per outer iteration. In our inexact realizations however, we apply $\nu \in \{1,2,4\}$ steps of additive-Schwarz-preconditioned CG as an \emph{inner solver} to the current metric equation $(\bA+\bN(\uk))\bw=\bS\uk$, warm-started with $\bw_0=\uk/\lambda^{(k)}$. These methods are denoted GFDN($a_u$)-PCG(AS,~$\nu$) and CG-GFDN($a_u$)-PCG(AS,~$\nu$).

We also include the qEMDD method ($q=1,2$), the quadratic-model variant for nonlinear eigenvalue problems introduced in Remark~\ref{rem:quadratic-approximation-nevp}. 
Since this is a constrained optimization problem on the unit sphere, qEMDD uses a second order Taylor approximation to the energy locally around $u^{(k-1)}$ on the manifold $\mathcal M$ and thus uses the Riemannian Hessian $\Pu^\top \big(\Ai + \Hi({\bf{u}}) - \lambda \Si \big) \Pu$ introduced in Remark~\eqref{rem:quadratic-approximation-nevp}.
\begin{figure}[t]%
  \includegraphics[width=1.0\textwidth]{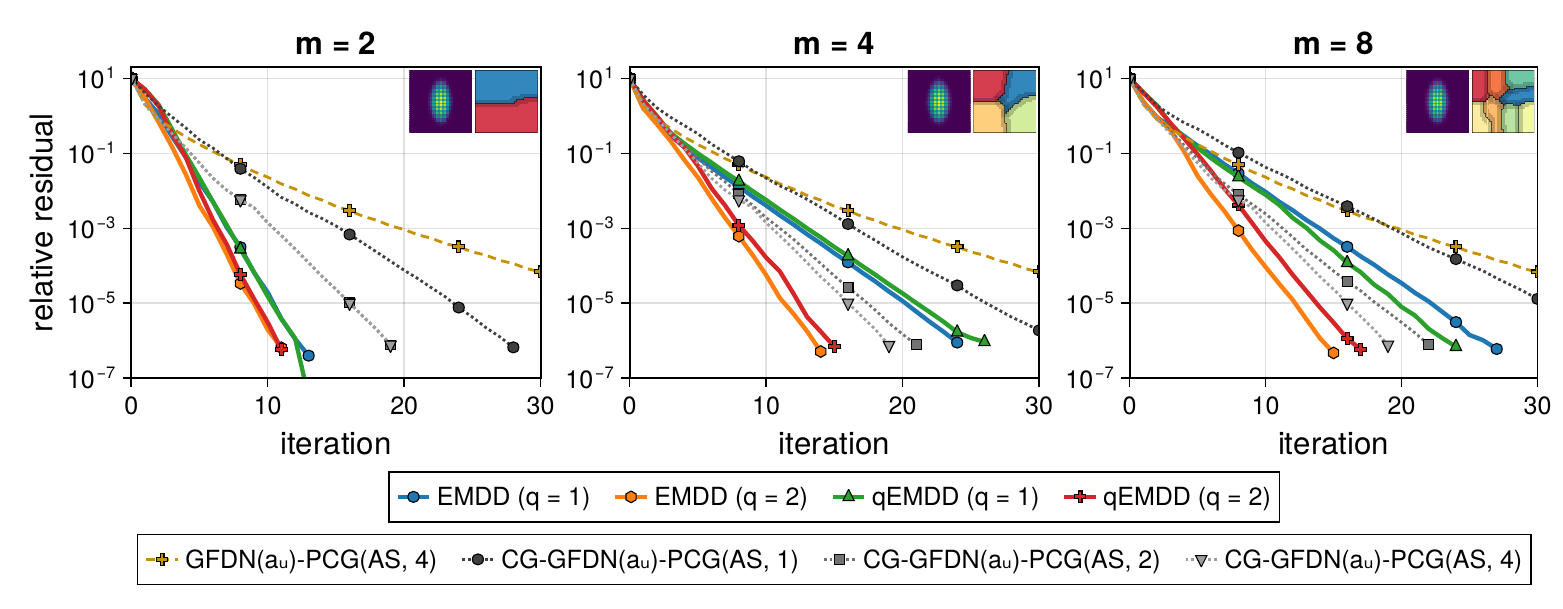}%
  \caption{%
    Convergence for the Gross--Pitaevskii ground state with $\beta=500$ obtained with EMDD and projected qEMDD ($q=1$ and $q=2$), and the energy-adaptive Sobolev gradient flows GFDN-PCG(AS,4) and CG-GFDN-PCG(AS,~$\nu$) with $\nu \in \{1,2,4\}$ inner PCG steps.%
  }%
  \label{fig:gp_cmp}%
\end{figure}%
\par
In Fig.~\ref{fig:gp_cmp}, we see that EMDD with $q=2$ is again consistently faster than $q=1$, and the projected qEMDD variant is a very good approximation of the full EMDD method but with a significantly lower cost. All methods converge, the CG variants of the GFDN method need less iterations than the plain GFDN method, and the EMDD (\(q=2\)) version needs the fewest outer iterations.

\section{Outlook}
Our purpose in this work has been to introduce a general framework for variational domain decomposition methods based on energetic principles, and to propose the additive, two-level EMDD method \eqref{eq:EMDD-step1}-\eqref{eq:EMDD-step2} as a first example of such energy-based DD methods. Numerical tests across a wide set of model problems in different regimes show that, despite its simplicity, EMDD is amongst the best-performing methods when compared to a number of model-specific DD-based methods.
We therefore believe that energy-based domain decomposition promises to be a robust numerical framework for energy-based minimization problems.

It is worth noting that more sophisticated variants of our additive, two-level EMDD method are certainly possible. We have not explored these variants in the present work, essentially because we wished to focus on the introduction of the general energy-based DD framework. Let us nevertheless, briefly mention some obvious possibilities:
\begin{enumerate}
    \item \textbf{Multiplicative EMDD}: Replace the parallel local problems \eqref{eq:EMDD-step1} by a sequential version
    \begin{equation}\label{eq:emdd-multiplicative}
      w_0^{(k)}=u^{(k-1)},\quad
      \tilde{V}_i^{(k)}=V_i+\vspan\{w_{i-1}^{(k)}\},\quad
      w_i^{(k)}=\argmin_{w\in\mathcal M\cap\tilde{V}_i^{(k)}}\en(w),
    \end{equation}
    and the vectors $w_i^{(k)}$ are used in the second level minimization \eqref{eq:EMDD-step2}. Thus, each local solve sees all preceding updates, but the local stage is no longer parallel. Such a sequential strategy was proposed in \cite{maliassov1998schwarz} and also the idea in \cite{luiRecentResultsDomain1996} is similar.
    \item \textbf{Restricted EMDD}. Use that the local minimizers can be written as $y_i^{(k)}=\alpha_i^{(k)}u^{(k-1)}+z_i^{(k)}$, with $z_i^{(k)}\in V_i$, and apply diagonal partition-of-unity operators $D_i$ (with $\sum_{i=1}^M D_i=I$) to replace $y_i^{(k)}$ in the space of \eqref{eq:EMDD-step2} by $y_i^{(k)}=D_i z_i^{(k)}$. The local problems remain those of \eqref{eq:EMDD-step1} and can still be solved in parallel.
    \item \textbf{Coarse-space enriched EMDD}. Construct a problem-adapted coarse space $W_0$ and include it in the second-level space of \eqref{eq:EMDD-step2} with \(V_{0,W}^{(k)}=\Vzk+W_0\).
  \end{enumerate}

  \vspace{1mm}
  In addition to the further development and testing of these variants and the incorporation of more sophisticated basis functions from modern FEM theory, the EMDD algorithm raises several other research questions. For instance, it would be interesting to explore the applicability of this energy-based DD methodology to more general constrained optimization problems. Obvious examples include Stiefel/Grassmann manifold optimization and nonlinear eigenvalue problems with nonlinearity in the eigenvalue. Another important question pertains to the development of a convergence analysis with explicit rates for the EMDD method, particularly in the case of eigenvalue problems. These and other questions will be the subject of future work.

\section*{Acknowledgments}
MH acknowledges support from the Deutsche Forschungsgemeinschaft (DFG) through the Emmy Noether Programme (Project No.\ 555300205). LT was supported by the German Research Foundation (DFG) through project 442047500. 

The authors acknowledge the use of generative AI for language revision and assistance with numerical implementation. All AI-assisted output was critically~reviewed and revised by the authors and they take full responsibility for the presented content.

\apptocmd{\sloppy}{\vbadness10000\relax}{}{}
\apptocmd{\sloppy}{\hbadness10000\relax}{}{}
\bibliographystyle{siamplain}
\bibliography{refs}

@article{cances2010numerical,
  title={Numerical analysis of nonlinear eigenvalue problems},
  author={Canc{\`e}s, Eric and Chakir, Rachida and Maday, Yvon},
  journal={Journal of Scientific Computing},
  volume={45},
  number={1},
  pages={90--117},
  year={2010},
  publisher={Springer}
}

@article{cances2021convergence,
  title={Convergence analysis of direct minimization and self-consistent iterations},
  author={Canc{\`e}s, Eric and Kemlin, Gaspard and Levitt, Antoine},
  journal={SIAM Journal on Matrix Analysis and Applications},
  volume={42},
  number={1},
  pages={243--274},
  year={2021},
  publisher={SIAM}
}

@techreport{karypisMETISSoftwarePackage1997,
  title = {{{METIS}}: {{A}} Software Package for Partitioning Unstructured Graphs, Partitioning Meshes, and Computing Fill-Reducing Orderings of Sparse Matrices},
  shorttitle = {{{METIS}}},
  author = {Karypis, George and Kumar, Vipin},
  year = {1997},
  pages = {1--31},
  institution = {University of Minnesota}
}

@book{toselliDomainDecompositionMethods2005,
  title = {Domain Decomposition Methods--Algorithms and Theory},
  author = {Toselli, Andrea and Widlund, Olof B.},
  year = {2005},
  series = {Springer Series in Computational Mathematics},
  number = {34},
  publisher = {Springer},
  address = {Berlin},
  xdoi = {10.1007/b137868},
  isbn = {978-3-540-20696-5},
  langid = {english},
  lccn = {QA402.2 .T67 2005}
}

@article{bezansonJuliaFreshApproach2017,
  title = {Julia: {{A Fresh Approach}} to {{Numerical Computing}}},
  shorttitle = {Julia},
  author = {Bezanson, Jeff and Edelman, Alan and Karpinski, Stefan and Shah, Viral B.},
  year = {2017},
  month = jan,
  journal = {SIAM Rev.},
  volume = {59},
  number = {1},
  pages = {65--98},
  publisher = {{Society for Industrial and Applied Mathematics}},
  issn = {0036-1445},
  xdoi = {10.1137/141000671},
  urldate = {2021-06-07}
}

@article{henningGrossPitaevskiiEquation2025,
  title = {The {{Gross--Pitaevskii}} Equation and Eigenvector Nonlinearities: {{Numerical}} Methods and Algorithms},
  shorttitle = {The {{Gross--Pitaevskii}} Equation and Eigenvector Nonlinearities},
  author = {Henning, Patrick and Jarlebring, Elias},
  year = {2025},
  journal = {SIAM Rev.},
  volume = {67},
  number = {2},
  pages = {256--317},
  publisher = {{Society for Industrial and Applied Mathematics}},
  xdoi = {10.1137/22M1516324}
}

@book{toselli2004domain,
  title={Domain decomposition methods-algorithms and theory},
  author={Toselli, Andrea and Widlund, Olof},
  volume={34},
  year={2004},
  publisher={Springer Science \& Business Media}
}

@book{quarteroni1999domain,
  title={Domain decomposition methods for partial differential equations},
  author={Quarteroni, Alfio and Valli, Alberto},
  year={1999},
  publisher={Oxford University Press}
}

@misc{spicher2026iterative,
  author={Spicher, Florian and Wihler, Thomas P.},
  title={Iterative Finite Element Approximation of Non-Monotone Semilinear Diffusion-Reaction Equations},
  year={2026},
  xeprint={2607.07327},
  xarchivePrefix={arXiv},
  xprimaryClass={math.NA},
  note={Preprint, arXiv:2607.07327},
  xdoi={10.48550/arXiv.2607.07327}
}

@book{Smith1996Domain,
  author    = {Smith, Barry F. and Bj{\o}rstad, Petter E. and Gropp, William D.},
  title     = {{Domain Decomposition: Parallel Multilevel Methods for Elliptic Partial Differential Equations}},
  publisher = {Cambridge University Press},
  address   = {Cambridge, UK},
  year      = {1996},
  isbn      = {978-0-521-49589-9}
}

@article{cai2002nonlinearly,
  title={{Nonlinearly preconditioned inexact Newton algorithms}},
  author={Cai, Xiao-Chuan and Keyes, David E},
  journal={SIAM Journal on Scientific Computing},
  volume={24},
  number={1},
  pages={183--200},
  year={2002},
  publisher={SIAM}
}

@article{cai2002non,
  title={{Non-linear additive Schwarz preconditioners and application in computational fluid dynamics}},
  author={Cai, Xiao-Chuan and Keyes, David E and Marcinkowski, Leszek},
  journal={International journal for numerical methods in fluids},
  volume={40},
  number={12},
  pages={1463--1470},
  year={2002},
  publisher={Wiley Online Library}
}

@article{hwang2007class,
  title={{A class of parallel two-level nonlinear Schwarz preconditioned inexact Newton algorithms}},
  author={Hwang, Feng-Nan and Cai, Xiao-Chuan},
  journal={Computer methods in applied mechanics and engineering},
  volume={196},
  number={8},
  pages={1603--1611},
  year={2007},
  publisher={Elsevier}
}

@article{liu2015field,
  title={{Field-split preconditioned inexact Newton algorithms}},
  author={Liu, Lulu and Keyes, David E},
  journal={SIAM Journal on Scientific Computing},
  volume={37},
  number={3},
  pages={A1388--A1409},
  year={2015},
  publisher={SIAM}
}

@article{dolean2016nonlinear,
  title={{Nonlinear preconditioning: How to use a nonlinear Schwarz method to precondition Newton's method}},
  author={Dolean, Victorita and Gander, Martin J and Kheriji, Walid and Kwok, Felix and Masson, Roland},
  journal={SIAM Journal on Scientific Computing},
  volume={38},
  number={6},
  pages={A3357--A3380},
  year={2016},
  publisher={SIAM}
}

@article{klawonn2014nonlinear,
  title={Nonlinear feti-dp and bddc methods},
  author={Klawonn, Axel and Lanser, Martin and Rheinbach, Oliver},
  journal={SIAM Journal on Scientific Computing},
  volume={36},
  number={2},
  pages={A737--A765},
  year={2014},
  publisher={SIAM}
}

@article{klawonn2017nonlinear,
  title={{Nonlinear FETI-DP and BDDC methods: a unified framework and parallel results}},
  author={Klawonn, Axel and Lanser, Martin and Rheinbach, Oliver and Uran, Matthias},
  journal={SIAM Journal on Scientific Computing},
  volume={39},
  number={6},
  pages={C417--C451},
  year={2017},
  publisher={SIAM}
}

@article{cai1994domain,
  title={{Domain decomposition methods for monotone nonlinear elliptic problems}},
  author={Cai, Xiao-Chuan and Dryja, Maksymilian},
  journal={Contemporary mathematics},
  volume={180},
  pages={21--21},
  year={1994},
  publisher={American Mathematical Society}
}

@article{cai1998parallel,
  title={{Parallel Newton--Krylov--Schwarz algorithms for the transonic full potential equation}},
  author={Cai, Xiao-Chuan and Gropp, William D and Keyes, David E and Melvin, Robin G and Young, David P},
  journal={SIAM Journal on Scientific Computing},
  volume={19},
  number={1},
  pages={246--265},
  year={1998},
  publisher={SIAM}
}

@inproceedings{cai1994newton,
  title={{Newton-Krylov-Schwarz methods in CFD}},
  author={Cai, X-C and Gropp, William D and Keyes, David E and Tidriri, Moulay D},
  booktitle={Numerical methods for the Navier-Stokes equations: Proceedings of the International Workshop Held at Heidelberg, October 25--28, 1993},
  pages={17--30},
  year={1994},
  organization={Springer}
}

@article{chaouqui2022linear,
  title={Linear and nonlinear substructured Restricted Additive Schwarz iterations and preconditioning},
  author={Chaouqui, Faycal and Gander, Martin J and Kumbhar, Pratik M and Vanzan, Tommaso},
  journal={Numerical Algorithms},
  volume={91},
  number={1},
  pages={81--107},
  year={2022},
  publisher={Springer}
}

@inproceedings{lions1988schwarz,
  title={{On the Schwarz alternating method. I}},
  author={Lions, Pierre-Louis and others},
  booktitle={First international symposium on domain decomposition methods for partial differential equations},
  volume={1},
  pages={42},
  year={1988},
  organization={Paris, France}
}

@article{dryja1997nonlinear,
  title={On the nonlinear domain decomposition method},
  author={Dryja, Maksymilian and Hackbusch, Wolfgang},
  journal={BIT Numerical Mathematics},
  volume={37},
  number={2},
  pages={296--311},
  year={1997},
  publisher={Springer}
}

@article{lui1999schwarz,
  title={{On Schwarz alternating methods for nonlinear elliptic PDEs}},
  author={Lui, SH1756041},
  journal={SIAM Journal on Scientific Computing},
  volume={21},
  number={4},
  pages={1506--1523},
  year={1999},
  publisher={SIAM}
}

@article{knoll2004jacobian,
  title={{Jacobian-free Newton--Krylov methods: a survey of approaches and applications}},
  author={Knoll, Dana A and Keyes, David E},
  journal={Journal of Computational Physics},
  volume={193},
  number={2},
  pages={357--397},
  year={2004},
  publisher={Elsevier}
}

@article{maliassov1998schwarz,
xurl = {https://doi.org/10.1515/rnam.1998.13.1.45},
title = {{On the Schwarz alternating method for eigenvalue problems}},
author = {S. Yu. Maliassov},
pages = {45--56},
volume = {13},
number = {1},
journal = {Russian Journal of Numerical Analysis and Mathematical Modelling},
xdoi = {doi:10.1515/rnam.1998.13.1.45},
year = {1998},
lastchecked = {2026-09-07}
}

@inproceedings{luiRecentResultsDomain1996,
  title = {Some Recent Results on Domain Decomposition Methods for Eigenvalue Problems},
  booktitle = {Proc. {{Ninth Int}}. {{Conf}}. on {{Domain Decomposition Methods}}},
  author = {Lui, S. H.},
  year = 1996,
  pages = {426--433}
}

@article{chan2002subspace,
  title={Subspace correction multi-level methods for elliptic eigenvalue problems},
  author={Chan, Tony F and Sharapov, Ilya},
  journal={Numerical linear algebra with applications},
  volume={9},
  number={1},
  pages={1--20},
  year={2002},
  publisher={Wiley Online Library}
}

@article{sleijpen1996jacobi,
  title={{A Jacobi--Davidson iteration method for linear eigenvalue problems}},
  author={Sleijpen, Gerard L. G. and van der Vorst, Henk A.},
  journal={SIAM Journal on Matrix Analysis and Applications},
  volume={17},
  number={2},
  pages={401--425},
  year={1996},
  publisher={SIAM}
}

@article{hwang2010parallel,
  title={{A parallel additive Schwarz preconditioned Jacobi--Davidson algorithm for polynomial eigenvalue problems in quantum dot simulation}},
  author={Hwang, Feng-Nan and Wei, Zih-Hao and Huang, Tsung-Ming and Wang, Weichung},
  journal={Journal of Computational Physics},
  volume={229},
  number={8},
  pages={2932--2947},
  year={2010},
  publisher={Elsevier}
}

@article{zhao2016parallel,
  title={{Parallel two-level domain decomposition based Jacobi--Davidson algorithms for pyramidal quantum dot simulation}},
  author={Zhao, Tao and Hwang, Feng-Nan and Cai, Xiao-Chuan},
  journal={Computer Physics Communications},
  volume={204},
  pages={74--81},
  year={2016},
  publisher={Elsevier}
}

@article{wang2019convergence,
  title={{On the convergence of a two-level preconditioned Jacobi--Davidson method for eigenvalue problems}},
  author={Wang, Wei and Xu, Xuejun},
  journal={Mathematics of Computation},
  volume={88},
  number={319},
  pages={2295--2324},
  year={2019}
}

@article{wang2018two,
  title={A two-level overlapping hybrid domain decomposition method for eigenvalue problems},
  author={Wang, Wei and Xu, Xuejun},
  journal={SIAM Journal on Numerical Analysis},
  volume={56},
  number={1},
  pages={344--368},
  year={2018},
  publisher={SIAM}
}

@article{kalantzis2020domain,
  title={{A domain decomposition Rayleigh--Ritz algorithm for symmetric generalized eigenvalue problems}},
  author={Kalantzis, Vassilis},
  journal={SIAM Journal on Scientific Computing},
  volume={42},
  number={6},
  pages={C410--C435},
  year={2020},
  publisher={SIAM}
}

@article{bach1998quantum,
  title={Quantum electrodynamics of confined nonrelativistic particles},
  author={Bach, Volker and Fr{\"o}hlich, J{\"u}rg and Sigal, Israel Michael},
  journal={Advances in Mathematics},
  volume={137},
  number={2},
  pages={299--395},
  year={1998},
  publisher={Elsevier}
}

@incollection{dusson2021feshbach,
  title={{The Feshbach--Schur map and perturbation theory}},
  author={Dusson, Genevi{\`e}ve and Sigal, Israel Michael and Stamm, Benjamin},
  booktitle={Partial Differential Equations, Spectral Theory, and Mathematical Physics},
  pages={65--88},
  year={2021},
  publisher={European Mathematical Society-EMS-Publishing House GmbH}
}

@article{knyazev2001toward,
  title={Toward the optimal preconditioned eigensolver: Locally optimal block preconditioned conjugate gradient method},
  author={Knyazev, Andrew V},
  journal={SIAM journal on scientific computing},
  volume={23},
  number={2},
  pages={517--541},
  year={2001},
  publisher={SIAM}
}

@book{lang2002introduction,
  title={Introduction to differentiable manifolds},
  author={Lang, Serge},
  year={2002},
  publisher={Springer Science \& Business Media}
}

@article{grigori2026additive,
  title={An additive two-level parallel variant of the DMRG algorithm with coarse-space correction},
  author={Grigori, Laura and Hassan, Muhammad},
  journal={SIAM Journal on Scientific Computing},
  volume={48},
  number={3},
  pages={A1312--A1337},
  year={2026},
  publisher={SIAM}
}

@article{spillane2014abstract,
  title={Abstract robust coarse spaces for systems of PDEs via generalized eigenproblems in the overlaps},
  author={Spillane, Nicole and Dolean, Victorita and Hauret, Patrice and Nataf, Fr{\'e}d{\'e}ric and Pechstein, Clemens and Scheichl, Robert},
  journal={Numerische Mathematik},
  volume={126},
  number={4},
  pages={741--770},
  year={2014},
  publisher={Springer}
}

@article{hou1997multiscale,
  title={A multiscale finite element method for elliptic problems in composite materials and porous media},
  author={Hou, Thomas Y and Wu, Xiao-Hui},
  journal={Journal of computational physics},
  volume={134},
  number={1},
  pages={169--189},
  year={1997},
  publisher={Elsevier}
}

@article{maalqvist2014localization,
  title={Localization of elliptic multiscale problems},
  author={M{\aa}lqvist, Axel and Peterseim, Daniel},
  journal={Mathematics of Computation},
  volume={83},
  number={290},
  pages={2583--2603},
  year={2014}
}

@article{heid2021gradient,
  title={Gradient flow finite element discretizations with energy-based adaptivity for the Gross-Pitaevskii equation},
  author={Heid, Pascal and Stamm, Benjamin and Wihler, Thomas P},
  journal={Journal of computational physics},
  volume={436},
  pages={110165},
  year={2021},
  publisher={Elsevier}
}

@article{badiaGridapExtensibleFinite2020,
  title = {Gridap: {{An}} Extensible {{Finite Element}} Toolbox in {{Julia}}},
  shorttitle = {Gridap},
  author = {Badia, Santiago and Verdugo, Francesc},
  year = {2020},
  month = aug,
  journal = {Journal of Open Source Software},
  volume = {5},
  number = {52},
  pages = {2520},
  issn = {2475-9066},
  xdoi = {10.21105/joss.02520},
  urldate = {2020-09-01},
  langid = {english}
}

@article{theisenScalableTwoLevelDomain2024,
  title = {A {{Scalable Two-Level Domain Decomposition Eigensolver}} for {{Periodic Schr\"odinger Eigenstates}} in {{Anisotropically Expanding Domains}}},
  author = {Theisen, Lambert and Stamm, Benjamin},
  year = 2024,
  month = oct,
  pages = {A3067-A3093},
  publisher = {{Society for Industrial and Applied Mathematics}},
  issn = {1064-8275},
  xdoi = {10.1137/23M161848X},
  urldate = {2024-10-07},
  journal = {SIAM J. Sci. Comput.}
}

@article{anderson1965iterative,
  title={Iterative procedures for nonlinear integral equations},
  author={Anderson, Donald G},
  journal={Journal of the ACM (JACM)},
  volume={12},
  number={4},
  pages={547--560},
  year={1965},
  publisher={ACM New York, NY, USA}
}

@misc{codeZenodo2026,
  author       = {Theisen, Lambert and
                  Benjamin, Stamm and
                  Hassan, Muhammad and
                  Jedrzej, Baraniak},
  title        = {{EnergyMinimizingDD.jl: Energy-minimizing domain
                   decomposition for finite-element problems in Julia
                  }},
  month        = sep,
  year         = 2026,
  publisher    = {Zenodo},
  version      = {v0.1.1},
  xdoi          = {10.5281/zenodo.22942907},
  url          = {https://doi.org/10.5281/zenodo.22942907},
  swhid        = {swh:1:dir:84e0843c86ac13f1c079108c57d43452c3011c77
                   ;origin=https://doi.org/10.5281/zenodo.22937234;vi
                   sit=swh:1:snp:f77c13cbc31c3f354fac3a07b9a195c946ad
                   16c6;anchor=swh:1:rel:0ef5f5500337cdb726c86e373354
                   b0ec1f5d5e08;path=lamBOOO-
                   EnergyMinimizingDD.jl-2f43e48
                  },
}

\end{document}